\tikzset{>=stealth',
     cvertex/.style={circle,draw=black,inner sep=1pt,outer sep=3pt},
     vertex/.style={circle,fill=black,inner sep=1pt,outer sep=3pt},
     star/.style={circle,fill=yellow,inner sep=0.75pt,outer sep=0.75pt},
     tvertex/.style={inner sep=1pt,font=\criptsize},
     gap/.style={inner sep=0.5pt,fill=white}}
\newcommand{\arrowrl}[3][20]
{
\hspace{-5pt}
\begin{tikzpicture}
\node (A) at (0,0) {};
\node (B) at (1,0) {};
\draw[->] ($(A)+(0,0.2)$) -- node [above] {$\scriptstyle f^*$} ($(B)+(0,0.2)$);
\draw [->] ($(B)+(0,0.2)$) -- node [below] {$\scriptstyle f_*$} ($(A)+(0,0.2)$);
\end{tikzpicture}
\hspace{-5pt}
}
\newcommand{\adj}[2][20]{\arrowrl}
\newcommand{\CC}{\ensuremath{\mathbb{C}}}
\newcommand{\QQ}{\ensuremath{\mathbb{Q}}}
\newcommand{\NN}{\ensuremath{\mathbb{N}}}
\newcommand{\ra}{\ensuremath{\rightarrow}}
\newcommand{\lra}{\ensuremath{\longrightarrow}}
\newcommand{\tr}{\ensuremath{\tau}} 
\DeclareMathOperator{\add}{add}
\DeclareMathOperator{\depth}{depth}
\DeclareMathOperator{\End}{End}
\DeclareMathOperator{\Ext}{Ext}
\DeclareMathOperator{\Hom}{Hom}
\DeclareMathOperator{\op}{op}
\DeclareMathOperator{\Sing}{Sing}
\DeclareMathOperator{\Spec}{Spec}
\newcommand{\mc}[1]{\ensuremath{\mathcal{#1}}}   
\newcommand{\mf}[1]{\ensuremath{\mathfrak{#1}}}   
\newcommand{\CM}{\operatorname{{CM}}}
\newcommand{\gl}{\operatorname{gl.dim}\nolimits}
\newcommand{\gs}{\operatorname{gs}\nolimits}
\newcommand{\mmod}[1]{\operatorname{mod}(#1)}
\newcommand{\ef}[1]{{\color{black} #1}}
\theoremstyle{theorem}
\newtheorem{Thm}{Theorem}[section]         
\newtheorem{lem}[Thm]{Lemma}
\newtheorem{cor}[Thm]{Corollary}
\newtheorem{Prop}[Thm]{Proposition}   
\newtheorem{Qu}[Thm]{Question}
\theoremstyle{remark}
\newtheorem{Bem}[Thm]{Remark}
\newtheorem{ex}[Thm]{Example}
\theoremstyle{definition}
\newtheorem{defi}[Thm]{Definition} 
\title{Trace ideals, normalization chains, and endomorphism rings}
\author{Eleonore Faber}
\address{
School of Mathematics, University of Leeds, LS2 9JT Leeds, UK
}
\email{e.m.faber@leeds.ac.uk}
\date{\today}
\thanks{
\noindent The author is a Marie Sk{\l}odowska-Curie fellow at the University of Leeds (funded by the European Union's Horizon 2020 research and innovation programme under the Marie Sk{\l}odowska-Curie grant agreement No 789580).
} 
\subjclass[2010]{13C14, 
13H10, 
13B22, 
14B05, 
16E10 
} 
\keywords{trace ideal, normalization, conductor ideal, global spectrum, noncommutative resolution of singularities, finite global dimension, endomorphism ring, nearly Gorenstein}
\begin{document}

\begin{abstract} 
In this paper we consider reduced (non-normal) commutative noetherian rings $R$. With the help of conductor ideals and trace ideals of certain $R$-modules we deduce a criterion for a reflexive $R$-module to be closed under multiplication with scalars in an integral extension of $R$. Using results of Greuel and Kn\"orrer this yields a characterization of plane curves of finite Cohen--Macaulay type in terms of trace ideals. \\
Further, we study one-dimensional local rings $(R,\mf{m})$ such that that their normalization is isomorphic to  the endomorphism ring $\End_R(\mf{m})$: we give a criterion for this property in terms of the conductor ideal, and show that these rings are nearly Gorenstein. 
Moreover, using Grauert--Remmert normalization chains, we show the existence of noncommutative resolutions of singularities of low global dimensions for curve singularities.

\end{abstract}

 \dedicatory{Dedicated to Gert-Martin Greuel on the occasion of his 75th birthday}

\maketitle


\section{Introduction }

This paper studies trace ideals and conductor ideals of reduced commutative noetherian rings and their relation to endomorphism rings of modules, in particular maximal Cohen-Macaulay modules and reflexive modules. The motivation comes from two directions: on the one hand, in order to compute the integral closure of a commutative ring $R$ in its total quotient ring (i.e., the \emph{normalization} $\widetilde R$ of $R$), one forms an ascending chain of endomorphism rings of certain ideals that stabilizes at the normalization (first studied in the analytic context by Grauert and Remmert \cite{GrauertRemmert71}). This can be used to formulate an algorithm for computation of integral closure, which has been implemented in computer algebra systems like {\sc Singular}. See \cite{deJong98, GreuelPfister08} for the original algorithms, and \cite{BoehmDeckerSchulze,GreuelLaplagneSeelisch,ParallelNormalization} for more recent enhancements of these ideas. \\
 Here, one would like to have a short chain of endomorphism rings, so that the normalization is reached in few steps of the algorithm. If one knew the conductor ideal $\mc{C}$ of the normalization, then this chain would only be of length $1$, since $\End_R(\mc{C})$ is the normalization of $R$. In general it is not possible to find the conductor ideal, and thus we have to come up with some test ideals that are hopefully sufficiently close to the conductor. Therefore we will study \emph{trace ideals} of $R$-modules: these ideals are easy to calculate from the presentation matrix of the module and we will see that they tell us whether the module from which they come from is closed under multiplication with scalars of a certain integral extension of $R$. This also leads us to consider conductor ideals of smaller integral extensions of $R$ than $\widetilde R$. \\
In the study of trace ideals it is  also natural to ask which ideals in a commutative ring $R$ can occur as trace ideals. Recently it has been studied, when \emph{every} ideal of $R$ is isomorphic to a trace ideal \cite{Lindo, LindoPande}. See  \cite{KobayashiTakahashi} for an answer in the local case and \cite{GotoIsobeKumashiro} for connections with stable ideals. Since here we are mainly interested in the class of Cohen--Macaulay modules, one is led to the finer question: which ideals in $R$ are isomorphic to trace ideals of Cohen--Macaulay modules over $R$? The first case to consider, is $R$ of \emph{finite $\CM$-type}, that is, there are only finitely many isomorphism classes of Cohen--Macaulay modules over $R$. \\ 
Our main results in this direction are: a criterion for a reflexive module over a ring to be a module over an integral extension using trace ideals and conductor ideals (Theorem \ref{Thm:traceequalconductor}), and a characterization of plane curves of finite Cohen--Macaulay type with trace ideals (Cor.~\ref{Cor:finitetraceCM}). \\

As another application of normalization chains, we are interested in rings that are ``nearly'' normal from the point of view of the normalization algorithm: here we study one dimensional reduced local rings $(R,\mf{m})$. If such an $R$ is not regular, then the singular locus of $\Spec(R)$ is zero-dimensional and determined by the maximal ideal of $R$. In this case there is a natural chain of endomorphism rings, cf.~\cite{Leuschke07, IyamaRejective}, starting with
$$R \subseteq \End_R(\mf{m}) \subseteq \cdots $$
We say that $R$ has a \emph{$1$-step normalization} or is \emph{$1$-step normal} if $\widetilde R \cong \End_R(\mf{m})$. Note that if $R$ is regular, then it is also $1$-step normal, since then $R$ is isomorphic to its maximal ideal.\\

 We give a characterization of $1$-step normal rings in terms of the maximal ideal (Prop.~\ref{Prop:conductor-1-step} and Cor.~\ref{Cor:maximal-isom-normalization}) and study connections with nearly and almost Gorenstein rings that have recently appeared in work of Herzog--Hibi--Stamate  about the trace of canonical modules \cite{HerzogHibietc}. \\ 
 
The other direction of research is the study of endomorphism rings of finitely generated modules over a commutative ring. These endomorphism rings are in general noncommutative but still inherit some nice properties (like noetherianity) from $R$. Recently, the study of various endomorphism rings has flourished in both commutative and noncommutative algebra, as well as in algebraic geometry and representation theory and even has applications to theoretical physics. In particular interesting are endomorphism rings of modules that have \emph{finite global dimension}. They can be seen as a noncommutative analog of resolution of singularities: let $R$ be a reduced noetherian ring and let $M$ be a faithful $R$-module. Then $\Lambda=\End_R(M)$ is called a \emph{noncommutative resolution (=NCR) of singularities} if $\gl \End_R(M) < \infty$, see \cite{DaoIyamaTakahashiVial}. Moreover, if $\Lambda$ is homologically homogeneous, then $\Lambda$ is called a \emph{noncommutative crepant resolution (=NCCR) of singularities}, cf.~\cite{vandenBergh04,IyamaWemyss10}. For more about the rationale behind these definitions, see \cite{Leuschke12, DFI}. \\
Recently, there were quite a few results on construction of NC(C)Rs and their properties, see e.g. \cite{IyamaWemyss10a, BLvdB10, FMS, IyamaNakajima, HibiRings, SpenkovdB}. In particular, it is interesting which values the global dimension can assume: this should give some information about the singularities of $\Spec(R)$. Therefore, in \cite{DFI} the \emph{global spectrum $\gs(R)$ of a singularity $\Spec(R)$} was defined as the set of all possible finite global dimensions of endomorphism rings of Cohen--Macaulay R-modules. 
In \cite[Theorem 4.6]{DoFI} the global spectra of some ADE-curve singularities were determined. \\

Our main result in this direction is that for \emph{any} curve singularity, the integers $1,2$ are in the global spectrum (Lemma \ref{12ingspec}) and $3\in \gs(R)$ if and only if the singularity $\Spec(R)$ is not of type $A_{2n}$ (Theorem \ref{Thm:3ingspec}).

\subsection{Structure of the paper} 
Our goal was to make this paper as self-contained as possible, thus in Section \ref{Sec:Prelim} we first review some homological facts and then introduce the main players: trace ideals and conductor ideals. We also review the construction of the normalization of a commutative ring by an ascending chain of endomorphism rings (the Grauert--Remmert normalization algorithm). In the next section we characterize reflexive $R$-modules, where $R$ is a reduced ring, that are closed under scalar multiplication with elements in a finite birational extension $R'$, that is, these $R$-modules are also $R'$-modules (Theorem \ref{Thm:traceequalconductor}). In Section \ref{Sub:traceandfiniteCM} the relationship between finite Cohen--Macaulay type and trace ideals is studied, in particular, we show that the coordinate ring of a (reduced) plane curve singularity is of finite CM-type if and only if there are finitely many possibilities for trace ideals of $\CM$-modules over this ring (see Cor.~\ref{Cor:finitetraceCM}). 

In Section \ref{Sec:one-step-normal} we consider reduced one-dimensional local rings (``curves'') and study those with $1$-step normalization, that is, $(R, \mf{m})$ such that $\widetilde R \cong \End_R(\mf{m}$). Using the reflexivity of the maximal ideal $\mf{m}$ (Prop.~\ref{Prop:mreflexive}), we deduce that these $1$-step normal rings are characterized by the property $\mf{m} \subseteq \mc{C}$, the conductor of the normalization (Prop.~\ref{Prop:conductor-1-step}). This implies that $1$-step normal rings are nearly Gorenstein (Cor.~\ref{Cor:1-step-nearly-Gorenstein}) in the sense of \cite{HerzogHibietc}. We also show that for $1$-step normal rings the maximal ideal $\mf{m}$ is isomorphic to its dual $\mf{m}^*=\Hom_R(\mf{m},R)$ (Prop.~\ref{Prop:onestepm}\footnote{Here the referee pointed out a much shorter proof of this proposition than in the first version of this manuscript.}), but that this property does not characterize $1$-step normal rings (Example \ref{Ex:selfdualmax}). 

In the final section we consider the global spectrum $\gs(R)$ of curve singularities $\Spec(R)$.  Making use of normalization chains and methods from representation theory, we show that $1,2$ and $3$ are contained in the global spectrum if and only if $R$ is not the coordinate ring of an $A_{2n}$-singularity (see Thm.~\ref{Thm:3ingspec}).

\section{Preliminaries} \label{Sec:Prelim}

\subsection{Conventions} \label{Sub:conventions}
In this paper, $\Lambda$ will denote any ring and the letter $R$ is reserved for a commutative noetherian ring. Additional assumptions, such as local, complete, Cohen--Macaulay or Gorenstein, will be explicitly stated. 

Recall that a commutative ring $R$ is \emph{reduced} if it has no non-zero nilpotent elements, or equivalently, it satisfies Serre's conditions $(R_0)$ and $(S_1)$, see \cite{Huneke06}. Also recall that the normalization $\widetilde R$ (=integral closure of $R$ in its total ring of fractions) is finitely generated as $R$-module if and only if the conductor $\mathcal{C}_{\widetilde R/R}$ of the normalization $\widetilde R$ in $R$ contains a non-zerodivisor. 

We are studying singularities of $\Spec(R)$, so most of the time we will assume that $R$ is reduced. In the study of normalizations we will also assume that the normalization $\widetilde R$ is a finitely generated $R$-module.

\subsection{Homological properties of modules}

In the following, maximal Cohen--Macaulay modules over $R$ will play a significant role. These modules have been studied not only in commutative algebra but are also important in representation theory. In particular the description of maximal Cohen--Macaulay modules over hypersurface rings via matrix factorizations is very useful, see e.g. \cite{LeuschkeWiegand, Yos, BuchweitzMCM}. Here we give the most general definition for not necessarily commutative rings, as well as the standard definitions in the commutative algebra context, for a good reference see e.g.~\cite{BrunsHerzog93}.

Let $\Lambda$ be an Iwanaga--Gorenstein ring. Recall, that this means that $\Lambda$ is noetherian and has finite injective dimension as left as well as right module over itself. Then a finitely generated $\Lambda$-module $M$ is called \emph{(maximal) Cohen--Macaulay}, abbreviated $\CM$, if $\Ext^i_\Lambda(M,\Lambda)=0$ for all $i >0$.  If $(R, \mf{m},k)$ is any local commutative ring, then a finitely generated $R$-module $M$ is $\CM$ if its depth  is equal to the (Krull-)dimension $\dim(R)$ of $R$, that is, the smallest $d \geq 0$ for which $\Ext^d(k,M) \neq 0$, is equal to $\dim(R)$.  If $R$ is any commutative ring, then a finitely generated $R$-module $M$ is $\CM$ if for any maximal ideal $\mf{m}$ of $R$ the localization $M_\mf{m}$ is $\CM$ in $R_\mf{m}$. 
 If $R$ is any commutative ring, we denote by $\mmod{R}$ the category of finitely generated $R$-modules.  The full subcategory of $\CM$-modules of $\mmod{R}$ is denoted by $\CM(R)$. Moreover, $R$ is called a \emph{Cohen--Macaulay ring} if it is $\CM$ as a module over itself.

\begin{Prop}[\cite{BuchweitzMCM}, Lemma 4.2.2.(iii)] \label{Prop:CMimpliesReflexive}
Let $\Lambda$ be an Iwanaga--Gorenstein ring (not necessarily commutative) and let $M$ be a Cohen--Macaulay module over $\Lambda$. Then $M$ is reflexive and $M^*=\Hom_\Lambda(M,\Lambda)$ is also Cohen--Macaulay over $\Lambda^{op}$. In particular, if $\Lambda$ is commutative, then $M^*$ is in $\CM(\Lambda)$.
\end{Prop}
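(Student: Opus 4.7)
The plan is to start from a projective resolution of $M$ and dualize, leveraging the vanishing $\Ext^{\ge 1}_\Lambda(M,\Lambda)=0$. Choose a resolution $\cdots \to P_2 \to P_1 \to P_0 \to M \to 0$ by finitely generated projective left $\Lambda$-modules. Applying $(-)^* := \Hom_\Lambda(-,\Lambda)$ produces a complex
$$0 \to M^* \to P_0^* \to P_1^* \to P_2^* \to \cdots$$
of finitely generated projective right $\Lambda$-modules whose cohomology computes $\Ext^{\bullet}_\Lambda(M,\Lambda)$; by the CM hypothesis the complex is exact. Let $n := \mathrm{id}(\Lambda_\Lambda)$, which is finite by the Iwanaga--Gorenstein assumption.

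To show $M^* \in \CM(\Lambda^{\op})$, I would use a standard dimension-shifting argument. Set $C_0 := M^*$ and $C_{i+1} := \mathrm{im}(P_i^* \to P_{i+1}^*)$; the above exact complex splits into short exact sequences $0 \to C_i \to P_i^* \to C_{i+1} \to 0$. Since $\Ext^{\ge 1}_{\Lambda^{\op}}(P_i^*, \Lambda) = 0$ by projectivity, the corresponding long exact Ext sequences yield $\Ext^j_{\Lambda^{\op}}(C_i, \Lambda) \cong \Ext^{j+1}_{\Lambda^{\op}}(C_{i+1}, \Lambda)$ for every $j \ge 1$. Iterating gives $\Ext^j_{\Lambda^{\op}}(M^*, \Lambda) \cong \Ext^{j+i}_{\Lambda^{\op}}(C_i, \Lambda)$, which vanishes once $j+i > n$ by the finite injective dimension of $\Lambda$. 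Hence every $\Ext^{\ge 1}_{\Lambda^{\op}}(C_i, \Lambda)$ is zero; in particular $M^* \in \CM(\Lambda^{\op})$, which in the commutative case is the stated consequence.

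For reflexivity of $M$, I would dualize the short exact sequences once more. The vanishing just established forces each $0 \to C_{i+1}^* \to P_i^{**} \to C_i^* \to 0$ to be exact, and these splice into an exact sequence
$$\cdots \to P_2^{**} \to P_1^{**} \to P_0^{**} \to M^{**} \to 0.$$
Because each $P_i$ is finitely generated projective, the canonical biduality map $P_i \to P_i^{**}$ is an isomorphism, so this sequence is identified with the original resolution of $M$ through a commutative ladder whose vertical arrows are the biduality maps (with the rightmost being $\eta_M : M \to M^{**}$, by naturality of evaluation). The five lemma then forces $\eta_M$ to be an isomorphism. The main bookkeeping concern is keeping track of left versus right module structures through the successive dualizations and invocations of finite injective dimension, but once the sides are correctly arranged each step is formal.
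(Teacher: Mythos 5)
Your argument is correct and complete: the paper gives no proof of this proposition (it is quoted from Buchweitz's manuscript), and your dualize-then-dimension-shift argument, followed by redualizing and comparing with the original resolution via the biduality ladder, is precisely the standard proof from that source. The only point to keep straight is the left/right bookkeeping you already flag, which is harmless since both injective dimensions are finite (and equal) for an Iwanaga--Gorenstein ring.
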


\begin{Prop}[\cite{Bass63}, Proposition 6.1]  \label{Prop:BassMstar-reflexive}
Let $R$ be a local, reduced Cohen--Macaulay ring and $M$ an $R$-module. Then the dual module $M^*=\Hom_R(M,R)$ is a reflexive $R$-module.
\end{Prop}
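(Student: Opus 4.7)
The plan is to show that the cokernel of the natural evaluation map $\eta_{M^*}\colon M^* \to M^{***}$ vanishes, by combining a general splitting observation with flat base change to the total ring of fractions.

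The starting point is the elementary identity $(\eta_M)^* \circ \eta_{M^*} = \mathrm{id}_{M^*}$, valid for every $R$-module $M$: if $f \in M^*$, then $\eta_{M^*}(f) \in M^{***}$ is the functional $\phi \mapsto \phi(f)$, and pulling this back along $\eta_M\colon m \mapsto \mathrm{ev}_m$ recovers $m \mapsto f(m)$, i.e.\ $f$. Thus $\eta_{M^*}$ is split-injective, and the problem reduces to showing that the complementary summand $K := \ker\bigl((\eta_M)^*\bigr) \subseteq M^{***}$ vanishes. Assuming $M$ finitely generated (as is customary in this paper, and needed below for $\Hom$ to commute with localization), a presentation $R^a \to R^b \to M \to 0$ dualizes to $M^* \hookrightarrow R^b$; iterating shows that $M^{**}$ and $M^{***}$ are finitely generated, with $M^{***}$ embedded in a free module $R^c$ coming from a presentation of $M^{**}$. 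Because $R$ is reduced, $R^c$ is torsion-free, and hence so is the direct summand $K \subseteq M^{***}$.

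The core step is localization at the total ring of fractions $Q = Q(R)$. Since $R$ is reduced, its associated primes are the minimal primes and $Q$ is a finite product of fields. Flat base change together with the finite presentation of $M$ gives $M^* \otimes_R Q \cong \Hom_Q(M \otimes_R Q,\, Q)$, and iterating identifies $M^{***} \otimes_R Q$ with the triple $Q$-dual of $M \otimes_R Q$; by naturality of $\eta$, the induced map $\eta_{M^*} \otimes_R Q$ corresponds to the canonical map $\eta_{(M \otimes Q)^*}$ over $Q$. Finitely generated modules over a product of fields are reflexive, so this map is an isomorphism, whence $K \otimes_R Q = 0$. Combined with torsion-freeness, the injection $K \hookrightarrow K \otimes_R Q$ forces $K = 0$, so $\eta_{M^*}$ is an isomorphism.

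The most delicate point is the base-change step: one has to carefully distinguish $R$-duals from $Q$-duals and invoke the naturality of $\eta$ together with the finite-presentation hypothesis, since it is easy to conflate the two notions of duality. Once that compatibility is pinned down, the splitting observation from the first paragraph immediately upgrades $\eta_{M^*}$ from a split injection to an isomorphism, yielding reflexivity of $M^*$; the local and Cohen--Macaulay hypotheses are not visibly used, with reducedness alone doing the work.
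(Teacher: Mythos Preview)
The paper does not supply its own proof of this proposition; it is simply quoted from Bass \cite{Bass63}. Your argument is correct and is in fact the classical one: the retraction identity $(\eta_M)^{*}\circ\eta_{M^{*}}=\mathrm{id}_{M^{*}}$ reduces the question to showing that the complementary summand $K\subseteq M^{***}$ is zero, and since $K$ is torsion-free and vanishes after passing to the total quotient ring $Q(R)$ (a finite product of fields because $R$ is reduced noetherian), one gets $K=0$. You are also right that only reducedness and noetherianity are doing work; the local and Cohen--Macaulay hypotheses are not needed for this statement. One small point worth making explicit: noetherianity (assumed throughout the paper) is used twice---to ensure $Q(R)$ is a \emph{finite} product of fields, and to guarantee that the successive duals $M^{*}$, $M^{**}$ are finitely presented so that the $\Hom$--tensor compatibility with the flat base change $R\to Q$ iterates cleanly.
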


\begin{Prop}[\cite{ChristensenGdim}, Prop. (1.1.9)] \label{Prop:reflexivebidual}
Let $R$ be a commutative noetherian ring and $M$ be a finitely generated module. Then $M$ is reflexive if and only if $M$ is isomorphic to its bidual $M^{**}=\Hom_R(M^*,R)$.
\end{Prop}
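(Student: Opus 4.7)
The forward direction is immediate from the definition of reflexivity. For the converse, suppose $\phi : M \to M^{**}$ is some (not necessarily canonical) abstract isomorphism; the goal is to show that the canonical evaluation map $\sigma_M : M \to M^{**}$ is also an isomorphism. My plan is a three-step reduction: first show $M^{*}$ is reflexive, then deduce that $M^{**}$ is reflexive, and finally transfer reflexivity back to $M$ along $\phi$.

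The main step is the first one. The key observation, valid for every module, is the identity
\[ (\sigma_M)^{*} \circ \sigma_{M^{*}} = \mathrm{id}_{M^{*}}, \]
which exhibits $\sigma_{M^{*}} : M^{*} \to M^{***}$ as a split monomorphism with some cokernel $K$, so $M^{***} \cong M^{*} \oplus K$. Dualising the hypothesis gives $M^{*} \cong M^{***}$, and hence $M^{*} \cong M^{*} \oplus K$. Noetherianity now enters via a standard cancellation argument: if $N \cong N \oplus K$ with $N,K$ finitely generated over a commutative Noetherian ring, then $K=0$. Iterating the decomposition produces a strictly ascending chain $K_1 \subsetneq K_1 \oplus K_2 \subsetneq K_1 \oplus K_2 \oplus K_3 \subsetneq \cdots$ of submodules of $N$ (with each $K_i \cong K$), contradicting the Noetherianity of $N$ unless $K=0$. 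Applied to $N=M^{*}$ this forces $\sigma_{M^{*}}$ to be an isomorphism, so $M^{*}$ is reflexive.

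For the second step, applying the same identity with $M$ replaced by $M^{*}$ shows that $\sigma_{M^{**}}$ is a split monomorphism with splitting $(\sigma_{M^{*}})^{*}$; but $(\sigma_{M^{*}})^{*}$ is now an isomorphism, being the dual of the isomorphism $\sigma_{M^{*}}$, so $\sigma_{M^{**}}$ itself is an isomorphism and $M^{**}$ is reflexive. The third step is pure naturality: from the abstract iso $\phi : M \to M^{**}$ and the reflexivity of $M^{**}$, the commutative diagram $\sigma_{M^{**}} \circ \phi = \phi^{**} \circ \sigma_{M}$ yields
\[ \sigma_{M} \;=\; (\phi^{**})^{-1} \circ \sigma_{M^{**}} \circ \phi, \]
a composition of three isomorphisms. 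The principal obstacle is the first step: converting the abstract isomorphism $\phi$ into information about the canonical map $\sigma_{M^{*}}$ relies crucially on the Noetherian cancellation lemma, which fails without the finite generation and Noetherian hypotheses.
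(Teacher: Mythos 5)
Your proof is correct. The paper offers no argument of its own here --- the proposition is quoted directly from Christensen's \emph{Gorenstein dimensions}, Prop.\ (1.1.9) --- and your three-step argument (the identity $(\sigma_M)^{*}\circ\sigma_{M^{*}}=\mathrm{id}_{M^{*}}$ exhibiting $\sigma_{M^{*}}$ as a split monomorphism, Noetherian cancellation of the complement to make $\sigma_{M^{*}}$ an isomorphism, and naturality of $\sigma$ to carry reflexivity from $M^{**}$ back to $M$ along the abstract isomorphism) is precisely the standard proof found in that reference.
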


The following facts about modules of homomorphism are helpful for computations with these modules and will be used later:

\begin{Prop}[\cite{HerzogKunz}, Lemma 2.1] \label{Prop:quotientHom} Let $R$ be a commutative ring, $K=Q(R)$ its total ring of quotients and $I,J \in K$ be two fractional ideals, such that $\depth_R(I)$ and $\depth_R(J)$ are greater or equal to $1$ (that is, both ideals contain a non-zerodivisor). Then $(I:_KJ)=\{ x \in K: xJ \subseteq I \}$ is isomorphic to $\Hom_R(J,I)$ as $R$-modules, via the homomorphism
$$\varphi: (I:J) \xrightarrow{} \Hom_R(J,I): x \mapsto (m \mapsto mx) \ .$$
\end{Prop}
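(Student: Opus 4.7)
My plan is to verify that $\varphi$ is well-defined, $R$-linear, injective, and surjective, in each nontrivial case exploiting a non-zerodivisor $j \in J$ (which exists by $\depth_R(J) \geq 1$) together with the fact that such a $j$ is a unit in $K = Q(R)$. Well-definedness and $R$-linearity of $\varphi$ are immediate: for $x \in (I :_K J)$ the map $m \mapsto mx$ lands in $I$ by the definition of the colon ideal, and it is $R$-linear because multiplication in $K$ is.

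For injectivity, if $\varphi(x) = 0$ then in particular $jx = 0$ in $K$ for a non-zerodivisor $j \in J$; since $j$ is a unit in $K$, this forces $x = 0$. For surjectivity, given $f \in \Hom_R(J,I)$ I would fix such a non-zerodivisor $j \in J$ and propose the candidate $x := f(j)/j \in K$. The task then reduces to proving $f(m) = mx$ for every $m \in J$, equivalently
$$j \cdot f(m) \; = \; m \cdot f(j) \quad \text{in } K.$$
Once this identity is available, it simultaneously shows $xJ \subseteq I$ (since $xm = f(m) \in I$), hence $x \in (I :_K J)$ and $\varphi(x) = f$. To obtain the displayed identity I would clear denominators: since $J$ is a fractional ideal, there exists a non-zerodivisor $s \in R$ with $sJ \subseteq R$; then $sm, sj \in R$, the elements $(sm)\,j$ and $(sj)\,m$ both lie in $J$ (as $J$ is an $R$-module), and they coincide in $K$. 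Applying $f$ and using $R$-linearity yields $(sm)\,f(j) = (sj)\,f(m)$, and cancelling the non-zerodivisor $s$ delivers the desired equation.

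The only point requiring any care is this final clearing-of-denominators step. One cannot naively write ``$f(jm) = j\,f(m)$'' because $jm$ need not belong to $J$; introducing the auxiliary factor $s$ is exactly what brings the computation back into $R$-module territory so that $R$-linearity of $f$ can legitimately be invoked. Modulo this subtlety, the rest of the argument is formal bookkeeping.
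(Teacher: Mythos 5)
Your argument is correct and complete; the paper itself gives no proof of this proposition, merely citing \cite{HerzogKunz}, Lemma 2.1, and your argument is the standard one. The only delicate point --- that one must clear denominators with a non-zerodivisor $s\in R$ satisfying $sJ\subseteq R$ before invoking $R$-linearity of $f$, since $j\in J\subseteq K$ need not lie in $R$ --- is exactly the point you identify and handle correctly, and the passage from the identity $j\,f(m)=m\,f(j)$ to surjectivity (including the verification that $x=f(j)j^{-1}$ actually lies in $(I:_K J)$) is carried out properly.
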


\begin{Bem}
\begin{enumerate}[leftmargin=*]
 \item In particular, if $I=R$ and $J \supseteq R$ in Prop.~\ref{Prop:quotientHom}, then $(I:_KJ)=(I:_RJ)$, since $1_R \in J$.
 \item  In the following, we will be in particular interested in endomorphism rings $\End_R(I)$. If $R$ is a reduced noetherian ring, and $I \subseteq R$ and ideal containing a non-zerodivisor $x$ of $R$, then it is easy to see (e.g., in \cite[Lemma 3.6.1]{GreuelPfister08}), that $\Hom_R(I,I) \cong \frac{1}{x}(xI:I)$ as rings.
\end{enumerate}
\end{Bem}

The following lemma deals with change of the base ring and homomorphism modules, and often allows one to consider endomorphism rings of generators. Here $M$ is a \emph{generator} (of $\mmod{R}$) if every finitely generated $R$-module is a homomorphic image of a direct sum of copies of $M$. Equivalently, $R$ is a direct summand of $M^n$ for some positive integer $n$.

\begin{lem} \label{Lem:HomChangeofRings}
Let $R$ be a reduced local ring. Suppose that $S$ is a finite birational extension of $R$, i.e., $R \subseteq S \subseteq \widetilde R$ and $S$ is finitely generated as $R$-module. Then if $M$ and $N$ are $(R,S)$-bimodules and $N$ is torsion-free, one has 
$$\Hom_R(M,N) = \Hom_S(M,N).$$ 
\end{lem}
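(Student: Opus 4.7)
The inclusion $\Hom_S(M,N) \subseteq \Hom_R(M,N)$ is automatic since $R \subseteq S$. The plan is to establish the reverse inclusion by a standard clearing-denominators argument: given an $R$-linear map, multiply by a non-zerodivisor of $R$ that lands $s \in S$ inside $R$, apply $R$-linearity, and then use torsion-freeness of $N$ to cancel.

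More concretely, I would start with $f \in \Hom_R(M,N)$ and fix $s \in S$, $m \in M$. The key preliminary observation is that, because $S \subseteq \widetilde{R} \subseteq Q(R)$ and $S$ is a finitely generated $R$-submodule of $Q(R)$, there exists a non-zerodivisor $r \in R$ such that $rs \in R$. (One can either argue pointwise, writing $s = a/b$ with $b$ a non-zerodivisor of $R$, or observe that there is a single common denominator clearing all of $S$ into $R$, namely any non-zerodivisor in the conductor $\mathcal{C}_{S/R}$, which is non-empty here because $S$ is module-finite over $R$.) Since the $R$- and $S$-actions on the bimodule $M$ agree on $R \subseteq S$, we may compute
\[
r \cdot f(sm) \;=\; f(rsm) \;=\; f((rs)m) \;=\; (rs) \cdot f(m) \;=\; r \cdot (s \cdot f(m)),
\]
where the first and third equalities use $R$-linearity of $f$, and the middle equalities use compatibility of the bimodule structures. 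Rearranging, $r \cdot \bigl(f(sm) - s f(m)\bigr) = 0$ in $N$.

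Now I would invoke torsion-freeness: by hypothesis $N$ has no $R$-torsion, so multiplication by the non-zerodivisor $r \in R$ is injective on $N$. This forces $f(sm) = s f(m)$, so $f$ is in fact $S$-linear.

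The proof is short, and the only real point to watch is the existence of the non-zerodivisor $r$ with $rs \in R$; this is the main obstacle only in the sense that it requires one to remember that \emph{finite birational} extensions live inside $Q(R)$ and have non-zerodivisors in their conductor, as recorded in the conventions of Section \ref{Sub:conventions}. Once $r$ is in hand, the torsion-freeness of $N$ does the rest with no further subtlety.
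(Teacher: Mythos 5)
Your proof is correct. The paper itself does not write out an argument here --- it simply cites \cite{LeuschkeWiegand}, Prop.\ 4.14 --- and the clearing-denominators argument you give (write $s=a/b$ with $b$ a non-zerodivisor of $R$, compute $r\bigl(f(sm)-sf(m)\bigr)=0$, and cancel $r$ using torsion-freeness of $N$) is exactly the standard proof of that cited result, so you have in effect supplied the omitted details rather than taken a different route.
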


\begin{proof}
See \cite{LeuschkeWiegand}, Prop. 4.14.
\end{proof}

\begin{Bem}
 Assume that $R, S, M, N$ are as in the lemma. If $R$ is of dimension $1$, then the lemma holds for any Cohen--Macaulay module $N$, since torsion-free is equivalent to Cohen--Macaulay under this assumption on $R$. If $R$ is Gorenstein of any dimension $>0$, then the above lemma also holds for $N \in \CM(R)$, since then $N$ is reflexive and reflexive implies torsion-free (see e.g. \cite[Exercise 1.4.19]{BrunsHerzog93}). Finally if $R$ is a local domain of any dimension $>0$, then the above lemma also holds for $N \in \CM(R)$, since the depth of a torsion-free module is $\geq 1$. 
 \end{Bem}

\subsection{Trace ideals} 
Here we will give the general definition for trace ideals over any ring $\Lambda$. Later we will only study them for commutative rings $R$. 

Let $\Lambda$ be a ring, $M$ a right $\Lambda$--module. We set $M^{*} = \Hom_{\Lambda}(M,\Lambda)$, the $\Lambda$--dual of $M$ endowed with its
natural structure of a left $\Lambda$--module. Note that $M$ is as well a left $E=\End_{\Lambda}(M)$, right $\Lambda$--bimodule and
$M^{*}$ a left $\Lambda$, right $\End_{\Lambda}(M)$--bimodule. 
Here we view $M$ as an $\Lambda\otimes E^{\op}$--module and
$M^{*}$ as an $\Lambda^{\op}\otimes E$--module.
The tensor product here can be taken over any subring
in the centre $Z(\Lambda)$ of $\Lambda$, as such a subring then also maps to the centre of $E$.

The natural pairing
\begin{align*}
M^{*}\times M&\rightarrow \Lambda
\\
(\lambda,m)&\mapsto \lambda(m)
\end{align*}
satisfies $\lambda(\varphi(m))=(\lambda\circ\varphi)(m)$ for each $\Lambda$--endomorphism $\varphi \colon M\to M$ and thus
induces an $\Lambda$--bilinear {\em trace homomorphism\/}
\begin{align*}
\tr_{\Lambda}\colon M^{*}\otimes_{E}M\to \Lambda\quad, \quad\lambda\otimes m\mapsto \lambda(m)
\end{align*}
The image $I=\tr_{\Lambda}(M)$ is a two--sided ideal in $\Lambda$, the {\em trace ideal\/} of $M$. We sometimes denote it simply by $\tr(M)$, when it is clear in which ring we are working.

For computation of trace ideals, we note the following: 
If $R$ is a commutative Cohen--Macaulay ring and $I$ is an ideal of grade $\geq 1$, then $\tr(I)$ is the fractional ideal
$$ \tr(I)=I \cdot I^{-1}\ ,$$
where $I^{-1}$ is defined to be the quotient $(R:_{Q(R)}I)$, which is in this case isomorphic to $I^*=\Hom_R(I,R)$. For a proof, see \cite[Lemma 1.1]{HerzogHibietc}. In particular, if $I$ is an ideal of grade $\geq 2$ on $R$, then $\tr(I)=I$, see \cite[Example 2.4]{Lindo}. \\
If $M$ is a $\CM$-module over $R$ and $R$ is a hypersurface ring of the form $A/(f)$, where $A=k[[x_1, \ldots, x_n]]$ or $A=k[x_1, \ldots, x_n]$, then $M$ can be represented by a matrix factorization $(X,Y)$ of $f$. Then $\tr(M)=I_1(\mathrm{syz}(M))$, the first fitting ideal of of the syzygy module of $M$, see \cite{Vasconcelos98}. Here $I_1(\mathrm{syz}(M))$ is the ideal generated by the entries of the matrix $Y$.

Some general facts about trace ideals of commutative rings $R$:

\begin{lem}[cf.~Prop.~2.8 in \cite{Lindo}]  \label{Lem:traceprops}
We have the following properties of trace ideals of finitely presented $R$-modules $M$, $N$: 
\begin{enumerate}[leftmargin=*]
\item $\tr(M \oplus N)=\tr(M) + \tr(N)$.
\item Let $R'$ be a commutative finitely generated flat $R$-algebra. Then $\tr_{R'}(M\otimes_R R')=\tr_R(M)\otimes_R R'$. This implies in particular, that taking trace ideals commutes with completion and localization.
\item If $M$ is reflexive, then $\End_R(M)\cong \End_R(M^*)$ and $\tr(M)=\tr(M^*)$. 
\item $\tr(M)=R$ if and only if $M$ is a generator of $\mmod{R}$. Note that if $R$ is local, this just means that $R \in \add(M)$.
\item $\tr(M)=\tr(M/\mathrm{tors}_R(M))$, where $\mathrm{tors}_R(M)$ denotes the torsion submodule of $M$.
\end{enumerate}
\end{lem}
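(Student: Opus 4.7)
The plan is to prove each of the five claims by direct manipulation of the trace pairing $\tr_R \colon M^* \otimes_{E} M \to R$ together with a few standard functorial identifications; the setting is commutative so we may drop the $\op$'s.

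For (1), I would start from the canonical splitting $(M \oplus N)^* \cong M^* \oplus N^*$ and observe that for $(\varphi_M, \varphi_N) \in (M \oplus N)^*$ and $(m,n) \in M \oplus N$ the trace pairing evaluates to $\varphi_M(m) + \varphi_N(n)$, so the image is exactly $\tr(M) + \tr(N)$. For (2), the hypothesis of finite presentation guarantees that $\Hom_R(M, R) \otimes_R R' \cong \Hom_{R'}(M \otimes_R R', R')$; combining this with the flatness of $R'$, which makes $- \otimes_R R'$ preserve the image of a map, the claim that $\tr_{R'}(M \otimes_R R') = \tr_R(M) \otimes_R R'$ follows by applying $- \otimes_R R'$ to the trace map. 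The statements about completion and localization are special cases. For (4), the characterization of generators in terms of trace is classical: if $\tr(M) = R$, then $1 = \sum_i \varphi_i(m_i)$ with $\varphi_i \in M^*$, and the $\varphi_i$ assemble into a surjection $M^n \twoheadrightarrow R$; conversely, any surjection $M^n \twoheadrightarrow R$ yields such a presentation of $1$, whence $\tr(M) \ni 1$.

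For (3), I would use reflexivity $M \cong M^{**}$ twice. The isomorphism $\End_R(M) \cong \End_R(M^*)$ is induced by the functor $(-)^*$, which on morphisms is $\varphi \mapsto \varphi^*$; this is an anti-isomorphism in general, but since $R$ is commutative, the opposite ring is canonically identified, and the inverse comes from applying $(-)^*$ again and using $M \cong M^{**}$. For $\tr(M) = \tr(M^*)$, a generator of $\tr(M^*)$ has the form $\psi(\lambda)$ for $\psi \in M^{**}$, $\lambda \in M^*$; under the evaluation isomorphism $M \cong M^{**}$, $\psi$ corresponds to some $m \in M$ with $\psi(\lambda) = \lambda(m)$, so the generating sets of $\tr(M)$ and $\tr(M^*)$ coincide. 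For (5), the key observation is that any $\varphi \in M^*$ annihilates the torsion submodule, because if $rm = 0$ for a non-zerodivisor $r \in R$, then $r \varphi(m) = 0$ in $R$ forces $\varphi(m) = 0$; here one uses that $R$ is reduced (more precisely, that $R$ is a torsion-free $R$-module), so that $\varphi$ descends to a well-defined map $M/\mathrm{tors}_R(M) \to R$. This yields $M^* \cong (M/\mathrm{tors}_R(M))^*$, and the traces coincide because every generator $\varphi(m)$ of $\tr(M)$ depends only on the class $\bar m$ in $M/\mathrm{tors}_R(M)$.

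I do not anticipate a genuine obstacle: each item is essentially a book-keeping exercise with the definition. The subtlest point is (3), where one must be careful to use the correct identification of $\End_R(M^*)$ with $\End_R(M)$ induced by $(-)^*$ and to invoke reflexivity for both the inverse endomorphism construction and the matching of the trace generators. The only place where a hypothesis on $R$ is really used is (5), where the reducedness of $R$ (via the existence of non-zerodivisors killing torsion elements) is essential.
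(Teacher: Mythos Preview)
Your arguments are correct and, for part (5), essentially identical to what the paper does. The paper in fact defers (1)--(4) entirely to \cite{Lindo} and only writes out (5); your direct treatments of (1)--(4) are fine and standard.

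One correction to your closing commentary: reducedness of $R$ is \emph{not} needed for (5). The step ``$r\varphi(m)=0$ with $r$ a non-zerodivisor forces $\varphi(m)=0$'' uses nothing beyond the definition of non-zerodivisor in $R$; equivalently, $R$ is automatically torsion-free as a module over itself, for any commutative ring. So (5) holds without any hypothesis on $R$ beyond commutativity (and noetherianity, if one wants the torsion submodule to behave well). Your parenthetical ``more precisely, that $R$ is a torsion-free $R$-module'' is the correct observation, but this is a tautology rather than a consequence of reducedness.
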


\begin{proof}
(1)--(4) are proven in Prop.~2.8 in \cite{Lindo}.\\
We prove (5) for a lack of reference: recall that 
$$\mathrm{tors}_R(M)=\{ m \in M: \text{ there exists a non-zerodivisor  }r \in R \textrm{ such that } rm=0\}.$$
 If $f: M \ra R$ is a homomorphism, then for any $m \in \mathrm{tors}_R(M)$ it follows that $f(m)=0$. 
Thus, for any $g(m)$ for $g \in \Hom_R(M,R)$ define $\bar g: M/\mathrm{tors}_R(M) \xrightarrow{} R, \bar m \mapsto g(m)$. It is easy to see that $\bar g$ is well defined. It follows that $g(m)=\bar g(\bar m)$ for any $m \in M$ and thus $\tr(M) \subseteq \tr(M/\mathrm{tors}_R(M)$. 
On the other hand, for any $\bar f \in \Hom_R(M/\mathrm{tors}(M),R)$ one can define $f \in \Hom_R(M,R)$ such that $f(m):=\bar f(\overline{m})$, where $\overline{m}$ is the image of $m$ under the natural projection $M \xrightarrow{} M/\mathrm{tors}_R(M)$. So for any $\bar f(\overline{m})$ there exists a homomorphism $f: M \ra R$ such that $f(m)=\bar f(\overline{m})$, which shows $\tr (M) \supseteq \tr(M/\mathrm{tors}_R(M))$. 
\end{proof}

\begin{Bem}
In (3), one may ask, whether either $\End_R(M) \cong \End_R(M^*)$ or $\tr(M)=\tr(M^*)$ characterizes reflexive modules. This is not the case: for the first property take $R=k[[x,y]]$ and $M=(x,y)$. Then $\End_R M = R$ and since $M^*=R$ also $\End_RM^*=R$, but clearly $M$ is not reflexive (its depth is $1$). For the second property take any local ring $(R, \mf{m})$ and $M=R/\mf{m} \oplus R$. Then by (1) of the Proposition, $\tr(M)=R$ and since $M^*\cong R$ 
$\tr (M^*)=R$. But clearly $M$ is not reflexive, since $M^{**}=R$.
\end{Bem}

\subsection{Conductors} 

Next we collect some properties of conductor ideals in order to study the relationship of conductor ideals and trace ideals in Section \ref{Sec:TraceConductor}. 

If $R$ is commutative noetherian and reduced, then recall that the \emph{normalization} $\widetilde R$ of $R$ is the integral closure of $R$ in its total ring of quotients $Q(R)$. If $R = \widetilde R$, then $R$ is called \emph{normal}. As already mentioned in Section \ref{Sub:conventions}, we will assume that $\widetilde R$ is module-finite over $R$. In particular, if $R$ is $1$-dimensional and local, then the normalization is $R$-module finite if $R$ is analytically unramified (equivalently, the completion $\widehat R$ is reduced), see \cite[Kor.~2.12]{HerzogKunz}. 

Following \cite{LeuschkeWiegand, GreuelKnoerrer}, recall that $R \subseteq S$ is a \emph{finite birational extension of rings} if $S$ is a ring contained in the total quotient ring $Q(R)$ and is finitely generated as an $R$-module. 
Note that this implies that $S$ is an integral extension of $R$ (see \cite[Appendix A]{BrunsHerzog93}).

\begin{defi} 
Let $R$ be a reduced ring and let $R \subseteq R'$ be a reduced extension of $R$ such that $R'$ is contained in $Q(R)$, the total ring of fractions, and moreover $R'$ is module finite over $R$. Then $(R:_{Q(R)}R')=\{ a \in Q(R): aR' \subseteq R\}$ is called the \emph{(relative) conductor of $R'$ in $R$}, denoted $\mc{C}_{R'/R}$. For $R'=\widetilde R$, the normalization of $R$, one simply calls $\mc{C}:=(R:_{Q(R)}\widetilde R)$, the \emph{conductor of $R$}.
\end{defi}

It is easy to see that $(R:_{Q(R)}R')=(R:_RR')$ is an ideal in $R$ and $R'$, and that $\mc{C}_{R'/R}=\Hom_R(R',R)=\mathrm{Ann}(R'/R)$.  
In fact, $\mc{C}_{R'/R}$ is the largest common ideal of $R'$ and $R$. Note that in particular, if $R=R'$, then $\mc{C}_{R'/R}=R$. %

\begin{lem}
Let $R$ be  reduced and let $R'$ be a reduced extension with $R' \subseteq Q(R)$ and $R'$ module finite over $R$. Then
$$R \subseteq R' \subseteq  \Hom_R(\mc{C}_{R'/R},R) \ . $$
\end{lem}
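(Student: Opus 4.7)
The first inclusion $R \subseteq R'$ holds by hypothesis, so the work is in the second inclusion. My plan is to apply Proposition \ref{Prop:quotientHom} with $I = R$ and $J = \mc{C}_{R'/R}$, which will identify $\Hom_R(\mc{C}_{R'/R}, R)$ with the fractional ideal $(R :_{Q(R)} \mc{C}_{R'/R})$ inside $Q(R)$; once this identification is in place, the desired inclusion $R' \subseteq \Hom_R(\mc{C}_{R'/R}, R)$ becomes the tautology $R' \cdot \mc{C}_{R'/R} \subseteq R$, which is just the defining property of the conductor.

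To invoke Proposition \ref{Prop:quotientHom} I need to know that both $R$ and $\mc{C}_{R'/R}$ contain a non-zerodivisor. For $R$ this is immediate since $1 \in R$, and for $\mc{C}_{R'/R}$ I would argue as follows: because $R'$ is module-finite over $R$ and sits inside $Q(R)$, I can pick a finite generating set $r'_1, \dots, r'_n$ of $R'$ and write each $r'_i = a_i/b_i$ with $a_i \in R$ and $b_i$ a non-zerodivisor of $R$; then $b := b_1 \cdots b_n$ is a non-zerodivisor of $R$ satisfying $b R' \subseteq R$, hence $b \in \mc{C}_{R'/R}$. (This is the standard observation that the conductor of a module-finite birational extension contains a non-zerodivisor.)

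With the depth hypothesis verified, Proposition \ref{Prop:quotientHom} gives the $R$-module isomorphism
\[
\varphi : (R :_{Q(R)} \mc{C}_{R'/R}) \xrightarrow{\ \cong\ } \Hom_R(\mc{C}_{R'/R}, R), \qquad x \mapsto (c \mapsto x c).
\]
The final step is then to observe that for any $r' \in R'$ and any $c \in \mc{C}_{R'/R}$, one has $r' c \in R$ by the very definition of $\mc{C}_{R'/R}$, so $r' \in (R :_{Q(R)} \mc{C}_{R'/R})$. Thus $R'$ sits inside the left-hand side of $\varphi$, and transporting along $\varphi$ gives $R' \subseteq \Hom_R(\mc{C}_{R'/R}, R)$ via the map sending $r'$ to multiplication by $r'$.

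There is no real obstacle here; the main subtlety is simply checking the non-zerodivisor hypothesis needed to apply Proposition \ref{Prop:quotientHom}, after which both inclusions are essentially definitional. It may be worth noting in the write-up that the resulting embedding $R' \hookrightarrow \Hom_R(\mc{C}_{R'/R}, R)$ is precisely the canonical map sending $r' \in R'$ to the multiplication endomorphism $c \mapsto r'c$, since this is the form in which the inclusion will be used later (for instance when comparing $\mc{C}_{R'/R}$ with trace ideals in Section \ref{Sec:TraceConductor}).
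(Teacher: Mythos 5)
Your proof is correct and follows essentially the same route as the paper: both arguments rest on using Proposition \ref{Prop:quotientHom} to identify $\Hom_R(\mc{C}_{R'/R},R)$ with the colon ideal $(R:_{Q(R)}\mc{C}_{R'/R})$. The only difference is in the final step, where the paper invokes $\Hom_R(\mc{C}_{R'/R},R)=(R')^{**}$ and the canonical embedding of $R'$ into its bidual, whereas you check $R'\subseteq (R:_{Q(R)}\mc{C}_{R'/R})$ directly from the defining property $R'\cdot\mc{C}_{R'/R}\subseteq R$ of the conductor; your version is more self-contained, and your explicit verification that $\mc{C}_{R'/R}$ contains a non-zerodivisor is a hypothesis of Proposition \ref{Prop:quotientHom} that the paper leaves implicit.
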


\begin{proof}  
From Prop.~\ref{Prop:quotientHom} it follows that $\Hom_R(\mc{C}_{R'/R},R)=(R: \mc{C}_{R'/R})$ contains $R$. Since $\Hom_R(\mc{C}_{R'/R},R)=(R')^{**}$, it follows that $R'\subseteq \Hom_R(\mc{C}_{R'/R},R)$. 
\end{proof}

The relation between conductor ideals and reflexive finite birational extensions of $R$ is described by the following result. 

\begin{Thm}[\cite{DesalvoManaresi}, Theorem 1.4] \label{Thm:Conductors-Overrings} Let $R$ be a reduced ring and let $\mc{F}$ be the set of all reflexive finite birational extensions of $R$. Then the map 
$$\mc{F} \xrightarrow{} \{ I \subseteq R: I \textrm{ is a conductor of an element in } \mc{F} \}: S \mapsto \mc{C}_{S/R} $$
is an order inverting bijection between the elements in $\mc{F}$ and their conductors. The inverse map is is given by $I \mapsto \End_R(I)$.
\end{Thm}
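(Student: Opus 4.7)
The plan is to exhibit $\Psi : I \mapsto \End_R(I)$ as a two-sided inverse of $\Phi : S \mapsto \mc{C}_{S/R}$. Order-reversing is immediate: if $R \subseteq S_1 \subseteq S_2$ are in $\mc{F}$, then $(R :_{Q(R)} S_2) \subseteq (R :_{Q(R)} S_1)$, since shrinking the argument of $(R :_{Q(R)} -)$ enlarges the quotient. So the substance lies in the identity
$$\End_R(\mc{C}_{S/R}) = S \qquad (S \in \mc{F}),$$
which is exactly where reflexivity of $S$ gets used.

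To set this up, I first check that Prop.~\ref{Prop:quotientHom} is applicable to the ideals $R$, $S$, and $\mc{C}_{S/R}$: the first two contain $1$, and writing a finite $R$-module generating set of $S \subseteq Q(R)$ over a common denominator produces a non-zerodivisor $b \in R$ with $bS \subseteq R$, hence $b \in \mc{C}_{S/R}$. Then Prop.~\ref{Prop:quotientHom} gives, as submodules of $Q(R)$,
$$\mc{C}_{S/R} = (R:S) \cong S^{*}, \qquad \End_R(\mc{C}_{S/R}) = (\mc{C}_{S/R} :_{Q(R)} \mc{C}_{S/R}),$$
and also $(R : \mc{C}_{S/R}) = (R : (R:S)) \cong S^{**}$. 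Because $S \in \mc{F}$ is reflexive, Prop.~\ref{Prop:reflexivebidual} identifies $S^{**}$ with $S$ inside $Q(R)$, giving the equality $(R : \mc{C}_{S/R}) = S$.

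The desired equality $\End_R(\mc{C}_{S/R}) = S$ then reduces to two short inclusions. For $S \subseteq (\mc{C}_{S/R} : \mc{C}_{S/R})$, the point is that $\mc{C}_{S/R}$ is the largest common ideal of $R$ and $S$, so multiplication by any $s \in S$ is an $R$-linear endomorphism of $\mc{C}_{S/R}$. For the reverse inclusion, any $x \in (\mc{C}_{S/R} :_{Q(R)} \mc{C}_{S/R})$ satisfies $x \mc{C}_{S/R} \subseteq \mc{C}_{S/R} \subseteq R$, so $x \in (R : \mc{C}_{S/R}) = S$; this is precisely the step in which reflexivity of $S$ is essential.

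With the key identity in hand, $\Psi \circ \Phi = \mathrm{id}_{\mc{F}}$. The opposite composition $\Phi \circ \Psi = \mathrm{id}$ is automatic from the definition of the codomain: any $I$ there equals $\mc{C}_{S/R}$ for some $S \in \mc{F}$, and then $\Psi(I) = \End_R(\mc{C}_{S/R}) = S$ yields $\Phi(\Psi(I)) = \mc{C}_{S/R} = I$. The only step requiring care is the bookkeeping in Prop.~\ref{Prop:quotientHom} that converts between $\Hom$-modules and fractional-ideal colons inside $Q(R)$; I do not anticipate any serious obstacle beyond invoking reflexivity of $S$ at the one point above.
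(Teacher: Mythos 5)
Your argument is correct. Note, though, that the paper does not prove this theorem at all --- it is imported verbatim from De Salvo--Manaresi (Theorem 1.4) --- so there is no in-paper proof to compare against; what you have written is the natural argument, and it is consistent with the special case the paper does prove, namely Lemma \ref{Lem:ConductorReflexive} ($\End_R(\mc{C})=\widetilde R$ via $\widetilde R\cong\mc{C}^*$). The one point genuinely requiring care is the one you flag: under the identifications of Prop.~\ref{Prop:quotientHom}, the canonical biduality map $S\to S^{**}$ corresponds to the inclusion $S\subseteq (R:(R:S))$ (an element $s$ goes to evaluation at $s$, i.e.\ multiplication by $s$ on $(R:S)$), so reflexivity must be read as ``the canonical map is an isomorphism'' --- an abstract isomorphism $S\cong S^{**}$ would not by itself force the inclusion to be an equality. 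With that reading, the chain $S\subseteq(\mc{C}_{S/R}:\mc{C}_{S/R})\subseteq(R:\mc{C}_{S/R})=S$ closes up exactly as you say, and the two compositions being the identity follow formally. A cosmetic remark: strictly speaking ``order inverting bijection'' also asks that the inverse reverse inclusions, which here is immediate since $I_1\subseteq I_2$ forces $\mc{C}_{S_2/R}\subseteq\mc{C}_{S_1/R}$ with $S_i=\End_R(I_i)$, i.e.\ $I_2\subseteq I_1$, so containments in the image are reflected; this costs one extra line.
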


\begin{lem} \label{Lem:ConductorReflexive}
Let $R$ be a reduced ring. Then the normalization $\widetilde R$ of $R$ and the conductor of the normalization $\mc{C}$ are both reflexive $R$-modules. Moreover, one has
$$\End_R(\mc{C})=\widetilde R \ .$$
\end{lem}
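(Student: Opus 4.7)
The plan is to compute the duals $\widetilde R^*$ and $\mc{C}^*$ explicitly as fractional ideals in $Q(R)$ using Prop.~\ref{Prop:quotientHom}, and then deduce both reflexivity statements and the endomorphism identity from these computations. The hypothesis that $\widetilde R$ is module-finite over $R$ ensures that $\mc{C}$ contains a non-zerodivisor (any common denominator for generators of $\widetilde R$), so Prop.~\ref{Prop:quotientHom} applies to all modules in sight.

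First I would observe that by Prop.~\ref{Prop:quotientHom} and the very definition of the conductor,
\[
\widetilde R^{*}=\Hom_R(\widetilde R,R)\cong (R:_{Q(R)}\widetilde R)=\mc{C}.
\]
The main step is then to show $\mc{C}^{*}=\Hom_R(\mc{C},R)\cong (R:_{Q(R)}\mc{C})=\widetilde R$. The inclusion $\widetilde R\subseteq (R:_{Q(R)}\mc{C})$ is immediate: since $\mc{C}$ is an ideal of $\widetilde R$ we have $\widetilde R\,\mc{C}\subseteq\mc{C}\subseteq R$. For the reverse inclusion, take $x\in Q(R)$ with $x\mc{C}\subseteq R$. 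The key observation is that $\mc{C}\widetilde R=\mc{C}$ (because $\mc{C}$ is a common ideal of $R$ and $\widetilde R$, hence stable under $\widetilde R$-multiplication), which gives $(x\mc{C})\widetilde R=x(\mc{C}\widetilde R)=x\mc{C}$. Therefore $x\mc{C}$ is an $R$-submodule of $R$ that is simultaneously an ideal of $\widetilde R$, and by maximality of the conductor among such common ideals we conclude $x\mc{C}\subseteq\mc{C}$. Now $\mc{C}$ is a finitely generated faithful $R$-module containing a non-zerodivisor, stable under multiplication by $x$, so the determinant (Cayley--Hamilton) trick forces $x$ to be integral over $R$, i.e.~$x\in\widetilde R$.

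With both duals identified, reflexivity is immediate: $\widetilde R^{**}=\mc{C}^{*}=\widetilde R$ and $\mc{C}^{**}=\widetilde R^{*}=\mc{C}$, so Prop.~\ref{Prop:reflexivebidual} yields that both $\widetilde R$ and $\mc{C}$ are reflexive $R$-modules. For the final assertion, I would use Prop.~\ref{Prop:quotientHom} once more to write $\End_R(\mc{C})\cong (\mc{C}:_{Q(R)}\mc{C})$. The inclusion $\widetilde R\subseteq (\mc{C}:_{Q(R)}\mc{C})$ is again just the fact that $\mc{C}$ is an ideal of $\widetilde R$, while the reverse inclusion follows from $(\mc{C}:_{Q(R)}\mc{C})\subseteq (R:_{Q(R)}\mc{C})=\widetilde R$, using $\mc{C}\subseteq R$ together with the identification of $\mc{C}^{*}$ established above.

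The only genuinely non-trivial step is the inclusion $(R:_{Q(R)}\mc{C})\subseteq\widetilde R$; everything else reduces to applying Prop.~\ref{Prop:quotientHom}, Prop.~\ref{Prop:reflexivebidual}, and the fact that $\mc{C}$ is a two-sided ideal of both $R$ and $\widetilde R$. The crux in that step is the upgrade from $x\mc{C}\subseteq R$ to $x\mc{C}\subseteq\mc{C}$, which requires precisely the combination of the $\widetilde R$-stability of $\mc{C}$ and the maximality of the conductor as a common ideal, setting up the determinant trick.
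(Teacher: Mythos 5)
Your proof is correct, but it takes a more self-contained route than the paper, which simply cites the remark after Proposition 1.2 of Desalvo--Manaresi for both the identity $\End_R(\mc{C})=\widetilde R$ and the reflexivity of $\widetilde R$, and then sketches an alternative in one line: establish $\widetilde R\cong\mc{C}^*$ ``using adjunction'' (i.e.\ $\Hom_R(\mc{C},R)\cong\Hom_{\widetilde R}(\mc{C},\Hom_R(\widetilde R,R))=\End_{\widetilde R}(\mc{C})=\widetilde R$, the last equality by normality) and dualize. You instead prove the crucial identification $\mc{C}^*\cong(R:_{Q(R)}\mc{C})=\widetilde R$ directly at the level of fractional ideals: the observation that $x\mc{C}$ is again a common ideal of $R$ and $\widetilde R$, hence lands in $\mc{C}$ by the maximality of the conductor, followed by the determinant trick on the finitely generated faithful module $\mc{C}$, is exactly the content that the paper outsources to the reference. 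Both arguments ultimately rest on the integral closedness of $\widetilde R$ via Cayley--Hamilton; yours applies it to $\mc{C}$ as an $R$-module, the adjunction version to $\mc{C}$ as an $\widetilde R$-module. What your version buys is complete independence from \cite{DesalvoManaresi} at the cost of a slightly longer argument; note also that you correctly flag that all applications of Prop.~\ref{Prop:quotientHom} are legitimate because the standing hypothesis that $\widetilde R$ is module-finite guarantees $\mc{C}$ contains a non-zerodivisor, and that your appeal to Prop.~\ref{Prop:reflexivebidual} is unproblematic since the isomorphisms $\widetilde R^{**}\cong\widetilde R$ and $\mc{C}^{**}\cong\mc{C}$ you construct are induced by the natural evaluation pairing on fractional ideals.
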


\begin{proof}
The equality $\End_R(\mc{C})=\widetilde R$ is shown in \cite[Remark after Prop.~1.2]{DesalvoManaresi}, which also shows the reflexivity of $\widetilde R$. Alternatively, in view of Prop.~\ref{Prop:reflexivebidual} one can show, using adjunction, that $\widetilde R \cong \mc{C}^*$ and thus $\widetilde R$ is reflexive. Dualizing again yields that $\mc{C}$ is reflexive. 
\end{proof}

\subsection{Normalization chains} \label{Sub:normalizationchains}
Here we briefly describe the ideas of the Grauert-Remmert normalization algorithm, which was our main motivation to study endomorphism rings of trace ideals. 

Let $R$ be a reduced noetherian (commutative) ring. One calls $N(R):=\{ \mf{p} \in \Spec(R): R_\mf{p}$ is not normal$\}$ the non-normal locus of $R$. It can be shown (see e.g. \cite[Lemma 3.6.3]{GreuelPfister08}) that $N(R)=V(\mc{C}_{\widetilde R / R})$, that is, the conductor of the normalization defines precisely the non-normal points of $\Spec(R)$. 

If $I$ is an ideal of $R$ that contains a non-zerodivisor, then 
$$ R \subseteq \End_R(I) \subseteq \widetilde R \ . $$
If $I = \mc{C}$, then $\End_R \mc{C} = \widetilde R$ (see Lemma \ref{Lem:ConductorReflexive}). But in practice it is hard to guess the conductor ideal, and thus one needs to proceed in steps. 

The rough idea is (here we follow \cite{deJong98} and \cite[3.6]{GreuelPfister08}): Start with the reduced non-normal ring $R_0:=R$ and an ideal $I_0 \subseteq R_0$ containing a non-zerodivisor and $\mathrm{supp}(I_0)=\Sing(\Spec(R_0))$ such that $R_0 \subsetneq \End_{R_0}(I_0)$ (this holds e.g. for radical ideals with $V(I_0) \supseteq N(R_0$)). Then $R_1:=\End_{R_0}(I_0)$ is a reduced ring lying between $R$ and $\widetilde R$. If $R_1$ is equal to the normalization, we are finished, otherwise find an ideal $I_1 \subseteq R_1$ such that $R_1 \subsetneq R_2:=\End_{R_1}(I_1)$ and repeat if necessary. This yields a chain of rings
\begin{equation}  \label{Eq:chain}
 R_0=R \subsetneq R_1=\End_{R_0}(I_0) \subsetneq \cdots \subsetneq R_l=\End_{R_{l-1}}(I_{l-1}) \ ,
\end{equation}
with $\mathrm{supp}(I_i) \subseteq \Sing(\Spec(R_i))$ which terminates at $R_{l}= \widetilde R$, $l < \infty$, if $\widetilde R$ is a finitely generated $R$-module. We call such a chain of rings a \emph{(Grauert--Remmert) normalization chain of $R$}. The crucial point in this algorithm is to find good \emph{test ideals} $I_j$ that are easy to compute, so that one needs as few steps as possible in the algorithm. In practice one takes $I_j$ to be the radical of the Jacobian ideal of $R_j$, see \cite[Algorithm 3.6.9]{GreuelPfister08}. However, it would be interesting to consider other type of ideals, or even endomorphism rings of modules of higher rank. In the next section, we will therefore study trace ideals, which are easy to calculate and sometimes are closer to the conductor than the radical of the Jacobian ideal.

\section{Traces and conductors} \label{Sec:TraceConductor}

Here we study connections between trace ideals of certain $R$-modules and conductor ideals of finite birational extensions of $R$. In particular we are interested in trace ideals of $\CM$-modules and we consider the finite $\CM$-type case in \ref{Sub:traceandfiniteCM}. \\
Note that the relation between trace ideals and birational extensions of $R$ has in particular been studied in \cite[Prop.~1.2]{GotoIsobeKumashiro}

\begin{Prop} \label{Prop:traceconductor}
Let $R \subseteq R'$ be a finite birational extension  of a reduced commutative noetherian ring $R$ and let $M'$ be a module over $R'$. Then
$$\tr_R(M') \subseteq \mc{C}_{R'/R}.$$
In particular, if $M' \cong R'$, then also the other inclusion holds, that is, 
$$\tr_R(R') = \mc{C}_{R'/R}.$$
\end{Prop}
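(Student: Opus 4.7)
The plan is to establish the two inclusions separately. For the inclusion $\tr_R(M')\subseteq \mc{C}_{R'/R}$, I want to show that an arbitrary generator $\lambda(m)$ of the trace ideal (with $\lambda\in\Hom_R(M',R)$ and $m\in M'$) satisfies $\lambda(m)\cdot R'\subseteq R$. The key idea is a ``fraction trick'': although $\lambda$ is only assumed $R$-linear, the finite birationality of $R\subseteq R'$ forces it to behave $R'$-linearly when paired with the $R'$-module structure on $M'$. Concretely, for any $r'\in R'\subseteq Q(R)$, I would write $r'=s/t$ with $s,t\in R$ and $t$ a non-zerodivisor, exploit the equality $tr'm=sm$ in $M'$, apply $\lambda$, and use $R$-linearity to get $t\lambda(r'm)=s\lambda(m)$ in $R$. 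Since $t$ is a non-zerodivisor in $Q(R)$, dividing gives $\lambda(r'm)=r'\lambda(m)$, and because the left-hand side lies in $R$, we conclude $r'\lambda(m)\in R$ for every $r'\in R'$, i.e.~$\lambda(m)\in\mc{C}_{R'/R}$.

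For the reverse inclusion when $M'\cong R'$, I would appeal directly to Proposition~\ref{Prop:quotientHom} to identify
\[ \Hom_R(R',R)\;\cong\;(R:_{Q(R)}R')\;=\;\mc{C}_{R'/R}, \]
where each $\lambda\in\Hom_R(R',R)$ corresponds to multiplication by some $a\in\mc{C}_{R'/R}$. Under this identification, the trace ideal $\tr_R(R')$ becomes the $R$-submodule of $R$ generated by all products $a\cdot r'$ with $a\in\mc{C}_{R'/R}$ and $r'\in R'$, which is the same as $\mc{C}_{R'/R}\cdot R'$. Since $\mc{C}_{R'/R}$ is also an ideal in $R'$ (indeed the largest common ideal of $R$ and $R'$, as recalled just after the definition of the conductor), this product equals $\mc{C}_{R'/R}$ itself.

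There is no real obstacle here: the only subtle point is the legitimacy of the fraction computation in the first step, which hinges on $R'\subseteq Q(R)$ so that every element of $R'$ admits a presentation $s/t$ with $t$ a non-zerodivisor of $R$. This is precisely where the assumption that $R\subseteq R'$ is a finite birational extension of a reduced ring enters; reducedness guarantees a well-behaved total quotient ring and enough non-zerodivisors to run the argument.
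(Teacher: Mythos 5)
Your proof is correct and follows essentially the same route as the paper: the first inclusion is exactly the paper's ``fraction trick'' showing $f(xm)=xf(m)$ and hence $xf(m)\in R$. For the reverse inclusion your use of Proposition~\ref{Prop:quotientHom} to identify $\Hom_R(R',R)$ with $(R:_{Q(R)}R')$ and compute $\tr_R(R')=\mc{C}_{R'/R}\cdot R'=\mc{C}_{R'/R}$ is just a mild repackaging of the paper's argument, which directly exhibits, for each $a\in\mc{C}_{R'/R}$, the multiplication map $f_a$ with $f_a(1)=a$.
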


\begin{proof}
Clearly $M'$ is also a module over $R$, so $\tr_R(M')$ is well-defined. Since we can identify $\mc{C}_{R'/R}$ with $(R:R')=\{ a \in R \mid aR' \subseteq R\}$, we have to show that for any $R$-linear map $f$ from $M'$ to $R$, for any $m \in M'$ and any $x \in R'$, the element $f(m) x$ is contained in $R$. An element $x\in R'$ can be written as $x=\frac{r}{s}$, for some $r, s \in R$, where $s$ is a non-zerodivisor. Then
$$sf(xm)=f(rm)=rf(m)= sxf(m),$$
and since $s$ is a non-zerodivisor, it follows that $f(xm)=xf(m)$. 
Since $M'$ is an $R'$-module, $xm \in M'$ and since $f:M' \xrightarrow{} R$ is an $R$-linear map, $f(xm)$ lies in $R$. This shows that $\tr_R(M') \subseteq \mc{C}_{R'/R}$. \\
For the second assertion it has to be shown that $\mc{C}_{R'/R} \subseteq \tr(R')$. Therefore let $a \in \mc{C}_{R'/R}$. Define a map $f_a: R' \ra R$ as $x \mapsto xa$. Then $f_a$ is well-defined since by definition $xa \in R$ for any $x \in R'$ and of course $f_a$ is $R$-linear. Clearly $f_a(1)=a$, so $a \in \tr_R(R')=(f(x) \mid f \in \Hom_{R}(R',R), x \in R')$.
\end{proof}

\begin{lem} Let $R \subseteq R' \subseteq R^{''}$ be finite birational extensions of a commutative noetherian ring $R$. Then: \\
(1) $\mc{C}_{R''/R} \subseteq \mc{C}_{R'/R}$. \\
(2) $\tr(R^{''}) \subseteq \tr(R')$. 
\end{lem}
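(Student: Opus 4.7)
The plan is to handle (1) by a direct inclusion check on the definition of the conductor, and then to deduce (2) as an immediate consequence of (1) combined with Proposition \ref{Prop:traceconductor}.

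For part (1), I would unwind the definition: $\mc{C}_{R''/R}=(R:_{Q(R)}R'')=\{a\in Q(R) : aR''\subseteq R\}$. Given any $a\in\mc{C}_{R''/R}$, the inclusion $R'\subseteq R''$ immediately gives $aR'\subseteq aR''\subseteq R$, so $a\in\mc{C}_{R'/R}$. This requires no hypothesis beyond what is stated.

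For part (2), the key observation is that Proposition \ref{Prop:traceconductor} applies equally to the finite birational extension $R\subseteq R'$ (with $M'=R'$) and to $R\subseteq R''$ (with $M'=R''$), since a finite birational extension is reduced and $R''$ is itself a module over $R''$. The proposition yields
\[
\tr_R(R')=\mc{C}_{R'/R}\qquad\text{and}\qquad \tr_R(R'')=\mc{C}_{R''/R}.
\]
Substituting these identifications into the inclusion proved in (1) gives $\tr_R(R'')\subseteq\tr_R(R')$, which is exactly (2).

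There is essentially no obstacle: part (1) is a one-line set-theoretic inclusion, and part (2) is a formal corollary of (1) through the previously established identification of $\tr_R(R')$ with the relative conductor. The only mild subtlety worth mentioning is that one should note the hypothesis that the ring is reduced (in force throughout this section) is needed to invoke Proposition \ref{Prop:traceconductor}; no direct trace-ideal argument bypassing (1) appears simpler than this route.
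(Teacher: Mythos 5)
Your proposal is correct and follows the paper's own argument essentially verbatim: part (1) by unwinding the definition of the relative conductor using $R'\subseteq R''$, and part (2) by combining (1) with the identification $\tr_R(S)=\mc{C}_{S/R}$ from Proposition \ref{Prop:traceconductor}. No issues.
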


\begin{proof}
(1): clear from the definitions of the relative conductors: 
$$\mc{C}_{R''/R}=\{ a \in R: aR'' \subseteq R\} \subseteq \{b \in R: bR' \subseteq R\}=\mc{C}_{R'/R},$$
 since $R' \subseteq R''$. \\
(2): Follows from (1) and Prop.~\ref{Prop:traceconductor}.
\end{proof}

\begin{ex}
In general one only has $\tr(M) \subseteq \mc{C}_{R'/R}$ for a $R'$-module $M$ and not equality: consider e.g. $R=\CC[[x,y]]/(x^3+y^4)$ the $E_6$ singularity. Then one can compute the indecomposable $\CM-R$-modules, see \cite{Yos}. The ring $R'=\End_R(\mf{m}_R)$ is a birational extension of $R$ and has the same indecomposable $\CM$-modules as $R$, with exception of $R$ itself. From Yoshino's list (see loc.~cit.~) of matrix factorizations of the indecomposables (in Yoshino's notation) one sees that $\tr (M_2)=(x,y^2)$ is strictly contained in $\mc{C}_{R'/R}=\mf{m}_R$. 
\end{ex}

There is a converse for reflexive modules:

\begin{Prop} \label{Prop:tracemodule}
Let $R \subseteq R'$ be a finite birational extension of a commutative noetherian reduced ring $R$ and let $M$ be a module over $R$. If $\tr_R(M) \subseteq \mc{C}_{R'/R}$, then $M^*$ is a module over $R'$. If moreover $M$ is reflexive, then $M$ is a module over $R'$. \end{Prop}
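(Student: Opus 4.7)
The starting point is the defining property $a\in\mc{C}_{R'/R}$ iff $aR'\subseteq R$. Since $\tr_R(M)$ is generated by expressions of the form $f(m)$ with $f\in M^*$ and $m\in M$, the hypothesis $\tr_R(M)\subseteq \mc{C}_{R'/R}$ forces $r'f(m)\in R$ for every $r'\in R'$. This observation essentially dictates how to extend the action on $M^*$.

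For the first assertion, given $r'\in R'$ and $f\in M^*$, I would define a map $r'\cdot f:M\to R$ pointwise by $(r'\cdot f)(m):=r'\,f(m)$. The value lies in $R$ precisely because $f(m)\in\tr_R(M)\subseteq\mc{C}_{R'/R}$. Additivity in $m$ is immediate, and for $s\in R$ one checks $(r'\cdot f)(sm)=r's\,f(m)=s\,(r'\cdot f)(m)$ using the commutativity of $R$. Hence $r'\cdot f\in M^*$, and the $R'$-module axioms are inherited pointwise from those of $R'$, giving $M^*$ an $R'$-module structure that extends its natural $R$-action.

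For the second assertion, assume $M$ is reflexive. I would apply the first part with $M$ replaced by $M^*$: by Lemma \ref{Lem:traceprops}(3) we have $\tr_R(M^*)=\tr_R(M)\subseteq\mc{C}_{R'/R}$, so $M^{**}=\Hom_R(M^*,R)$ carries an $R'$-module structure. Transporting this through the canonical isomorphism $M\cong M^{**}$ from Prop.~\ref{Prop:reflexivebidual}, which is $R$-linear, endows $M$ with an $R'$-module structure extending its given $R$-structure.

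No serious obstacle is expected here: once one spots the matching between the generating form of $\tr_R(M)$ and the defining condition of $\mc{C}_{R'/R}$, the argument is essentially formal. The only subtlety is to verify that the transported $R'$-action on $M$ genuinely restricts to the given $R$-action, which is an immediate consequence of the $R$-linearity of the biduality evaluation map but is worth recording explicitly.
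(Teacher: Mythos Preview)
Your proof is correct and follows essentially the same route as the paper: define the $R'$-action on $M^*$ pointwise via $(r'\cdot f)(m)=r'f(m)$, then bootstrap to $M^{**}\cong M$. The only minor difference is in how you justify $\tr_R(M^*)\subseteq\mc{C}_{R'/R}$ for the second step: you invoke Lemma~\ref{Lem:traceprops}(3) (reflexivity of $M$ gives $\tr(M^*)=\tr(M)$), whereas the paper instead applies Prop.~\ref{Prop:traceconductor} to the already-established $R'$-module $M^*$; both are valid and equally short.
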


\begin{proof}
Let $f \in M^*$ and $x \in R'$. Then $x \cdot f$ is a function from $M \ra R$, since for any $m \in M$ one has $x f(m) \in R$, which follows from the containment of $\tr_R(M)$ in the conductor $\mc{C}_{R'/R}$. Clearly $xf$ is an $R$-linear function, thus $M^*$ is also an $R'$-module.\\
By Prop. \ref{Prop:traceconductor}, this implies that $\tr_R(M^*) \subseteq \mc{C}_{R'/R}$. 
 Thus, by what we just proved, $(M^*)^*=M^{**}$ is also an $R'$-module. If $M$ is reflexive, this means that $M$ is a module over $R'$. 
\end{proof}

\begin{Thm} \label{Thm:traceequalconductor}
Let $R$ be a reduced noetherian local ring and let $M$ be a finitely generated reflexive $R$-module and let $R'$ be a finite birational extension. The following two assertions are equivalent: \\
(1) $\tr(M)\subseteq \mathcal{C}_{R'/R}$, where $\mc{C}_{R'/R}$ is the conductor of $R'$ in $R$. \\
(2) $M$ is a module over $R'$. 
\end{Thm}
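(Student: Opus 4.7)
The statement is essentially a packaging of Propositions~\ref{Prop:traceconductor} and~\ref{Prop:tracemodule}, so my plan is just to verify that these two results together give the claimed equivalence.

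For the implication $(1) \Rightarrow (2)$, I would invoke Proposition~\ref{Prop:tracemodule} directly: it states that whenever $\tr_R(M) \subseteq \mc{C}_{R'/R}$ and $M$ is a reflexive $R$-module, then $M$ carries an $R'$-module structure extending the $R$-action. Since the hypotheses of the theorem exactly match these, this direction is immediate.

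For $(2) \Rightarrow (1)$, I would apply Proposition~\ref{Prop:traceconductor} to the $R'$-module $M$. That proposition gives $\tr_R(M') \subseteq \mc{C}_{R'/R}$ for \emph{any} $R'$-module $M'$, with no reflexivity hypothesis needed; specialising to $M' = M$ yields the desired inclusion. Note that reducedness of $R$ is used implicitly here (so that $\mc{C}_{R'/R}$ is defined via the total ring of quotients and the argument in Prop.~\ref{Prop:traceconductor} that writes elements of $R'$ as $r/s$ with $s$ a non-zero-divisor is valid), and locality plays no role in this direction.

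Since both implications follow directly from results already proved in the preliminaries, there is no real technical obstacle: the content of the theorem is the observation that, for reflexive modules, the necessary condition $\tr_R(M) \subseteq \mc{C}_{R'/R}$ coming from Prop.~\ref{Prop:traceconductor} is also sufficient for $M$ to acquire an $R'$-structure. The only thing worth emphasising in the write-up is that reflexivity is genuinely used in one direction only, namely $(1) \Rightarrow (2)$, whereas $(2) \Rightarrow (1)$ holds for arbitrary finitely generated $M$.
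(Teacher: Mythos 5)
Your proof is correct and matches the paper's own argument exactly: the paper also deduces $(1)\Rightarrow(2)$ from Prop.~\ref{Prop:tracemodule} and $(2)\Rightarrow(1)$ from Prop.~\ref{Prop:traceconductor}. Your added remarks on where reflexivity and reducedness are actually used are accurate but not needed for the equivalence itself.
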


\begin{proof}
$(2) \Rightarrow (1)$:  
Follows from Prop.~\ref{Prop:traceconductor} $\tr(M) \subseteq \mc{C}$. \\
$(1) \Rightarrow (2)$: Follows from Prop.~\ref{Prop:tracemodule}.
\end{proof}

\begin{Bem}
In particular, if $R$ is irreducible, then $\tr_R(M)=\mc{C}$ if $M$ is a $\CM$-module over $\widetilde R$. If $R$ is not irreducible, then one can have an inclusion, e.g., consider $R=k[[x,y]]/(xy)$ with $M=R/(x)$. Then $M$ is isomorphic to an irreducible component of the normalization and $\tr_R(M)=(y) \subsetneq \mc{C}=(x,y)$.
\end{Bem}

Looking at the relative conductor $\mc{C}_{R'/R}$ of a ring $R \subseteq R' \subseteq \widetilde R$, we ask if the corresponding statements of Thm.~\ref{Thm:traceequalconductor} also holds for $\CM$-modules $M$ over $R$, namely, whether the following are equivalent: \\
(1) $\tr_R(M) \subseteq \mathcal{C}_{R'/R}$. \\
(2) $M$ is a module over $R'$. 

The implication $(2) \Rightarrow (1)$ holds for any $R$-module with this property by Prop.~\ref{Prop:traceconductor}. However, the other implication is in general not satisfied: consider a non-normal local ring $R$ with canonical module $\omega_R$ such that $\tr_R(\omega_R)=\mc{C}$, the conductor of the normalization (examples for such rings are the non-regular $1$-step normal rings considered in Section \ref{Sec:one-step-normal}). The canonical module is Cohen--Macaulay 
over $R$, but $\omega_R$ is not a module over the normalization $\widetilde R$: by \cite[Thm.~3.3.4]{BrunsHerzog93}  $R \cong \End_R(\omega_R)$ and thus by \cite{DoFI} $\omega_R$ is not a module over $\widetilde R$.

\subsection{Trace ideals and finite CM type} \label{Sub:traceandfiniteCM}

The guiding question of this section is: which ideals in $R$ are isomorphic to trace ideals of Cohen--Macaulay modules? The first case to consider, is $R$ of \emph{finite $\CM$-type}, that is, there are only finitely many isomorphism classes of $\CM$-modules over $R$. Rings of finite $\CM$-type have been extensively studied, in particular, they are classified for rings of Krull-dimension $\leq 2$, see \cite{LeuschkeWiegand} for an overview and references. \\
Here we pose the following

\begin{Qu}
Let $R$ be a complete local or graded ring. Are the following equivalent? \\
(1) $R$ is of finite $\CM$-type. \\
(2) There are only finitely many possibilities for $\tr_R(M)$, where $M \in \CM(R)$.
\end{Qu}

It is clear that $(1)$ implies $(2)$, since $\tr(M_1 \oplus M_2)=\tr(M_1) +\tr(M_2)$ and there are only finitely many isomorphism classes of indecomposable $\CM$-modules over $R$. The other implication is also true for a class of rings coming from geometry: coordinate rings of plane curve singularities. To prove this we will use results of Greuel and Kn\"orrer about $1$-dimensional rings of finite $\CM$-type.

\begin{Prop} \label{Prop:conductorequal}
Let $R$ be a reduced Gorenstein ring. 
 Let $R'$ and $R''$ be two finite birational extensions of $R$. Then $\mc{C}_{R'/R} \cong \mc{C}_{R''/R}$ if and only if $R' \cong R''$.
\end{Prop}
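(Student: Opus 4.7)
The plan is to identify each conductor with an $R$-dual of the corresponding extension and then reduce both implications to an application of $R$-duality.

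First I would rewrite the two conductors as duals. Since $R \subseteq R'$ and both contain the non-zerodivisor $1$, Proposition \ref{Prop:quotientHom} (and the remark following it) yields
$$\mc{C}_{R'/R} = (R:_{Q(R)} R') \cong \Hom_R(R',R) = (R')^{*},$$
and similarly $\mc{C}_{R''/R} \cong (R'')^{*}$ as $R$-modules. Thus the proposition reduces to the statement that $(R')^{*} \cong (R'')^{*}$ if and only if $R' \cong R''$ as $R$-modules.

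The direction $(\Leftarrow)$ is then immediate by functoriality of $\Hom_R(-,R)$. For the direction $(\Rightarrow)$, I would dualize the isomorphism $(R')^{*} \cong (R'')^{*}$ once more to obtain $(R')^{**} \cong (R'')^{**}$, and then use Proposition \ref{Prop:reflexivebidual} to conclude $R' \cong R''$, provided I know that $R'$ and $R''$ are reflexive as $R$-modules.

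The main obstacle is therefore to justify the reflexivity of an arbitrary finite birational extension over a reduced Gorenstein ring. This is where the Gorenstein hypothesis enters: any finite birational extension $R \subseteq S \subseteq Q(R)$ is a submodule of the total quotient ring and is hence torsion-free as an $R$-module, so in dimension one (where torsion-free coincides with maximal Cohen--Macaulay) Proposition \ref{Prop:CMimpliesReflexive} immediately gives reflexivity. Alternatively, one can argue as in Lemma \ref{Lem:ConductorReflexive}: the adjunction between $\Hom_R(-,R)$ and itself identifies $(R')^{**}$ with $\Hom_R(\mc{C}_{R'/R},R)$, and the reflexivity $R' \cong (R')^{**}$ can then be obtained from the equality $\End_R(\mc{C}_{R'/R}) = R'$ coming from Theorem \ref{Thm:Conductors-Overrings}. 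Once reflexivity is in hand the proof concludes in one line.
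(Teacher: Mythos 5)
Your argument is correct and reaches the conclusion by a somewhat different route than the paper. Both proofs hinge on the same key input: a finite birational extension $S$ of a Gorenstein ring is a reflexive $R$-module (via $S$ being $\CM$ and Prop.~\ref{Prop:CMimpliesReflexive}). But where the paper then invokes the Desalvo--Manaresi bijection (Theorem \ref{Thm:Conductors-Overrings}) between reflexive extensions and their conductors, you instead observe that $\mc{C}_{S/R}=(R:_{Q(R)}S)\cong S^{*}$ (Prop.~\ref{Prop:quotientHom}, or the remark right after the definition of the conductor) and finish by dualizing once more and applying biduality. This is more self-contained -- it replaces an external structural theorem by the elementary identity ``conductor $=$ dual of the extension'' plus Prop.~\ref{Prop:reflexivebidual} -- at the cost of making the reflexivity of $R'$ and $R''$ do all the work explicitly. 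Two caveats. First, your ``alternative'' justification of reflexivity via Theorem \ref{Thm:Conductors-Overrings} is circular as written: that theorem is a statement about the set of \emph{reflexive} extensions, so you must already know $R'\in\mc{F}$ before you may conclude $\End_R(\mc{C}_{R'/R})=R'$; discard that variant and keep the first one. Second, your first justification (torsion-free $\Rightarrow$ $\CM$) genuinely uses $\dim R=1$, so your proof as written covers only the one-dimensional case. This is a restriction the paper's proof also makes implicitly -- its unproved assertion that any finite birational extension is a $\CM$-module over $R$ is automatic only in dimension one -- and it is harmless for the application (Cor.~\ref{Cor:finitetraceCM} concerns plane curves), but you should either state the dimension hypothesis or supply an argument for why $S$ is $\CM$ in general.
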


\begin{proof}
If $R'$ is isomorphic to $R''$, then clearly their conductors are isomorphic. For the other direction, if $R \subseteq S$ is finite, then $S$ is a $\CM$-module over $R$. 
Since $R$ is Gorenstein, by Prop.~\ref{Prop:CMimpliesReflexive}, $S$ is also a reflexive $R$-module. Now Theorem \ref{Thm:Conductors-Overrings} implies the result.
\end{proof}

\begin{cor} \label{Cor:finitetraceCM}
Let $(R,\mf{m},k)$ be a reduced local complete ring of dimension $1$ and embedding dimension $2$ and $k$ containing $\QQ$ (that is, $R$ is the coordinate ring of a plane curve singularity). Then $R$ is of finite $\CM$-type if and only if there are finitely many possibilities for $\tr_R(M)$, $M \in \CM(R)$.
\end{cor}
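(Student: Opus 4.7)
The forward direction is immediate from Lemma~\ref{Lem:traceprops}(1): if $R$ has finite CM-type with indecomposable CM-modules $M_1,\ldots,M_s$, then every $M\in\CM(R)$ decomposes as $\bigoplus_i M_i^{a_i}$ and its trace ideal equals $\sum_{i\in S}\tr_R(M_i)$ for some $S\subseteq\{1,\ldots,s\}$, leaving at most $2^s$ possible trace ideals.

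For the converse I argue by the contrapositive, using the Greuel--Kn\"orrer theorem~\cite{GreuelKnoerrer}: a complete reduced plane curve singularity over a field $k\supseteq\QQ$ is of finite CM-type if and only if it is of ADE-type. Suppose $R$ is of infinite CM-type (hence non-ADE), and assume for a contradiction that $\{\tr_R(M):M\in\CM(R)\}$ is finite, say $\{I_1,\ldots,I_t\}$. Since $R$ is Gorenstein (being a hypersurface of embedding dimension $2$), every $M\in\CM(R)$ is reflexive by Proposition~\ref{Prop:CMimpliesReflexive}. For each $j$ set $R'_j:=\End_R(I_j)\subseteq\widetilde R$, a finite birational extension of $R$ for which $I_j\subseteq\mc{C}_{R'_j/R}$ by construction; Theorem~\ref{Thm:traceequalconductor} then says that any CM-module $M$ with $\tr_R(M)=I_j$ is automatically a module over $R'_j$, and by Lemma~\ref{Lem:HomChangeofRings} the $R$-isomorphism classes within this subcategory coincide with the $R'_j$-isomorphism classes. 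Hence
\[
\CM(R)=\bigcup_{j=1}^{t}\CM(R'_j)
\]
at the level of isomorphism classes.

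The plan is then to induct on the length $\ell:=\ell_R(\widetilde R/R)$ (finite by completeness and reducedness). The base case $\ell=0$ gives $R=\widetilde R$, a product of DVRs, with trivially finite CM-type. In the inductive step, each proper overring $R'_j\supsetneq R$ (those with $I_j=R$ contribute only copies of $R$) satisfies $\ell_{R'_j}(\widetilde{R'_j}/R'_j)<\ell$; granting the inductive hypothesis, each such $R'_j$ has finite CM-type, and the finite union above forces finite CM-type for $R$, contradicting infinite CM-type. The main obstacle is verifying that the hypothesis ``finitely many trace ideals'' transfers from $R$ to each $R'_j$, since $\tr_{R'_j}(N)$ is an ideal of $R'_j$ rather than of $R$, and can genuinely differ from $\tr_R(N)$ (as already for the cusp $R=k[[t^2,t^3]]$, $R'=k[[t]]$, where $\tr_R(R')=(t^2,t^3)$ while $\tr_{R'}(R')=R'$). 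My plan is to combine the containment $\tr_R(N)\cdot R'_j\subseteq\tr_{R'_j}(N)$ (from extension of scalars along $R\hookrightarrow R'_j$) with Lemma~\ref{Lem:HomChangeofRings}, the finite length of $R'_j/R$ as an $R$-module, and Greuel--Kn\"orrer's structural description of CM-modules over the intermediate rings in normalization chains of plane curves~\cite{GreuelKnoerrer}, so as to bound the number of possible values of $\tr_{R'_j}(N)$ by the finite list $\{I_1,\ldots,I_t\}$ and a finite amount of additional data intrinsic to $R'_j$. This transfer step is the technical heart of the argument.
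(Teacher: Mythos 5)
Your forward direction is the paper's and is fine. The converse, however, contains a genuine gap, and you flag it yourself: the whole induction hinges on transferring the hypothesis ``finitely many trace ideals'' from $R$ to the overrings $R'_j=\End_R(I_j)$, and for this you offer only a plan. The ingredients you list do not obviously close it. For an $R'_j$-module $N$ one does get the squeeze $\tr_R(N)\subseteq \tr_{R'_j}(N)\subseteq \bigl(\tr_R(N):_{Q(R)}\mc{C}_{R'_j/R}\bigr)$ via Lemma~\ref{Lem:HomChangeofRings}, but even granting that the two bounding fractional ideals differ by a module of finite length, a finite-length module over a ring with infinite residue field can have infinitely many submodules, so finiteness of $\{\tr_R(N)\}$ does not bound $\{\tr_{R'_j}(N)\}$. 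There is also a smaller flaw in the decomposition $\CM(R)=\bigcup_j\CM(R'_j)$: when $\Spec(R)$ is reducible a $\CM$-module need not be supported on every component, its trace ideal may then consist entirely of zero-divisors (e.g.\ $M=R/(x)$ over $k[[x,y]]/(xy)$, with $\tr_R(M)=(y)$), and $\End_R(I_j)$ is then not a finite birational extension sitting inside $\widetilde R$.

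The paper sidesteps all of this by using Greuel--Kn\"orrer in a different form. Satz~2 of \cite{GreuelKnoerrer} characterizes plane curve singularities of finite $\CM$-type as exactly those admitting only finitely many finite birational extensions $R\subseteq S\subseteq\widetilde R$. Each such $S$ is a $\CM$, hence (as $R$ is Gorenstein) reflexive, $R$-module with $\tr_R(S)=\mc{C}_{S/R}$ by Prop.~\ref{Prop:traceconductor}, and Prop.~\ref{Prop:conductorequal} (via Theorem~\ref{Thm:Conductors-Overrings}) says that $S\mapsto\mc{C}_{S/R}$ is injective up to isomorphism. So finitely many trace ideals of $\CM$-modules forces finitely many finite birational extensions, hence finite $\CM$-type --- no induction, no overrings of the form $\End_R(I_j)$, and no transfer of the finiteness hypothesis are needed. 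If you want to salvage your write-up, replacing the induction by this observation is the cleanest fix.
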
 

\begin{proof}
We have already seen that if $R$ is of finite $\CM$-type, then there only exists a finite number of possible trace ideals. Suppose that there are only finitely many possibilities for $\tr(M)$. Since for any possible finite birational extension $R \subseteq S$ one has $\tr_R(S)= \mc{C}_{S/R}$ and by  Prop.~\ref{Prop:conductorequal} $\tr_R(S) \cong \tr_R(S')$ if and only if $S \cong S'$, it follows, that there are only finitely many possible finite birational extensions of $R$. But this is exactly the characterization of coordinate rings plane curves of finite $\CM$-type of Greuel--Kn\"orrer, see \cite{GreuelKnoerrer}, Satz 2.
\end{proof}

\begin{Bem}
The proof of \cite[Satz 2]{GreuelKnoerrer} holds for all Gorenstein complete local rings of dimension $1$ that satisfy $\mathrm{mult}(R) < \mathrm{emb.dim}(R)+2$, so also Cor.~\ref{Cor:finitetraceCM} extends to this case.
\end{Bem}

It is not clear how to generalize this proof to higher dimension, that is, to Gorenstein singularities $X=\Spec(R)$ of finite $\CM$-type and dimension greater than or equal to $2$: by \cite{BuchweitzGreuelSchreyer, KnoerrerCohenMacaulay} these $X$ are precisely the ADE-hypersurface singularities. So in order to get an analog to Cor.~\ref{Cor:finitetraceCM}, one would need to show that any Gorenstein ring $R$ with finitely many isomorphism classes of trace ideals $\CM$-modules is isomorphic to the coordinate ring of one of these ADE-singularities. 

\begin{ex}
An example for a ring of infinite $\CM$-type is the coordinate ring of the swallowtail singularity. The graded rank one $\CM$-modules over the swallowtail were classified by Hovinen \cite[Thm.~4.4.7]{Hovinen}. He showed in particular that the first fitting ideals, or equivalently, the trace ideals, of pairwise non-isomorphic graded $\CM$-modules of rank $1$ are pairwise non-isomorphic. This shows that there are infinitely many possibilities for trace ideals of $\CM$-modules in this example.
\end{ex}

\section{One-step normalization and conductors} \label{Sec:one-step-normal}

In this section, let $(R,\mf{m})$ be a one-dimensional reduced local  ring.
The most natural normalization chain starts with
$$R \subseteq \End_R(\mf{m}) \subseteq \cdots $$
As defined in the introduction, we say that $R$ has a $1$-step normalization or is $1$-step normal if  $\widetilde R \cong \End_R(\mf{m})$. Note that $1$-step normal rings include the regular local rings, since in this case $\mf{m} \cong R$ and thus also $\End_R(\mf{m})\cong R$. Here we characterize $1$-step normal rings in terms of their maximal ideal and study connections with nearly and almost Gorenstein rings, see  \cite{HerzogHibietc} for more on these type of rings.

Recall that a local ring (or positively graded $k$-algebra) $R$ is called \emph{nearly Gorenstein} if $R$ is Cohen--Macaulay, admits a canonical module $\omega_R$ and $\tr_R(\omega) \supseteq \mf{m}$, where $\mf{m}$ is the (graded) maximal ideal of $R$. The trace of the canonical module measures the non-Gorenstein locus of $R$. 
First we prove the following fact, which has been proven for domains in \cite[Prop.~2.14]{CorsoHunekeKatzVasconcelos} (there it is more generally shown that for $1$-dimensional local domains every integrally closed ideal is reflexive):

\begin{Prop} \label{Prop:mreflexive}
Let $(R,\mf{m})$ be a one-dimensional Cohen--Macaulay local ring. Then $\mf{m}$ is a reflexive $\CM(R)$-module.
\end{Prop}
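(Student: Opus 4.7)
The plan is to first check that $\mf{m}$ is Cohen--Macaulay by a routine depth argument, then prove reflexivity by identifying $\mf{m}^{*}$ and $\mf{m}^{**}$ with fractional ideals of $R$ via Prop.~\ref{Prop:quotientHom} and comparing the resulting containments.

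For the CM property: since $R$ is one-dimensional Cohen--Macaulay, $\depth(R)=1$, so $\mf{m}$ contains a non-zerodivisor. Hence $\mf{m}$ is torsion-free of positive depth and, being a finitely generated module over a ring of dimension $1$, has depth exactly $1=\dim(R)$.

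For reflexivity, I would apply Prop.~\ref{Prop:quotientHom} twice: the ideals $\mf{m}$ and $(R:_{Q(R)}\mf{m})\supseteq R$ both have positive depth, so
\[ \mf{m}^{*}\cong (R:_{Q(R)}\mf{m}) \quad\text{and}\quad \mf{m}^{**}\cong (R:_{Q(R)}(R:_{Q(R)}\mf{m})) \]
as fractional ideals of $R$, and under these identifications the natural biduality map $\mf{m}\to \mf{m}^{**}$ becomes the obvious inclusion of fractional ideals. The containment $\mf{m}\subseteq (R:_{Q(R)}(R:_{Q(R)}\mf{m}))$ is automatic, so the task reduces to the reverse inclusion. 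If $y$ lies in $\mf{m}^{**}$, then $y\cdot(R:_{Q(R)}\mf{m})\subseteq R$ together with $1\in (R:_{Q(R)}\mf{m})$ forces $y\in R$; and if $y$ were a unit, the condition would force $(R:_{Q(R)}\mf{m})\subseteq y^{-1}R=R$, i.e.\ $(R:_{Q(R)}\mf{m})=R$.

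The main obstacle is therefore to rule out the equality $(R:_{Q(R)}\mf{m})=R$. I would do this by dualizing the short exact sequence $0\to \mf{m}\to R\to R/\mf{m}\to 0$: since $\mf{m}$ contains a non-zerodivisor we have $\Hom_R(R/\mf{m},R)=(0:_R\mf{m})=0$, which produces
\[ 0\to R\to \mf{m}^{*}\to \Ext^1_R(R/\mf{m},R)\to 0. \]
Hence $(R:_{Q(R)}\mf{m})=R$ would force $\Ext^1_R(k,R)=0$, contradicting the depth formula $\depth(R)=\min\{i:\Ext^i_R(k,R)\neq 0\}=1$. This forces $y\in\mf{m}$, so $\mf{m}=\mf{m}^{**}$ and $\mf{m}$ is reflexive.
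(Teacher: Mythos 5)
Your proof is correct, and in its second half it takes a genuinely different route from the paper. Both arguments begin the same way: dualize $0\to\mf{m}\to R\to k\to 0$ and use $\depth R=1$ to get $\Hom_R(k,R)=0$ and $\Ext^1_R(k,R)\neq 0$, hence $R\subsetneq\mf{m}^{*}$. From there the paper stays homological: it dualizes the resulting four-term sequence a second time to embed $\mf{m}^{**}$ into $R$, rules out $\mf{m}^{**}\cong R$ by passing to $\mf{m}^{***}$ and deriving that $R$ would be a DVR, and then identifies the cokernel of $\mf{m}^{**}\hookrightarrow R$ with $k$ using that $\Ext^1_R(\Ext^1_R(k,R),R)$ is killed by $\mf{m}$ (the regular case is split off separately). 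You instead translate everything into colon ideals via Prop.~\ref{Prop:quotientHom}, so that biduality becomes the inclusion $\mf{m}\subseteq\bigl(R:(R:\mf{m})\bigr)$; the reverse inclusion then follows from two elementary observations: $1\in(R:\mf{m})$ forces $\bigl(R:(R:\mf{m})\bigr)\subseteq R$, and a unit in $\bigl(R:(R:\mf{m})\bigr)$ would force $(R:\mf{m})=R$, which contradicts $\Ext^1_R(k,R)\neq 0$ directly. What your approach buys is a shorter and more uniform argument: no second dualization, no triple dual, and no case distinction for regular $R$ (a DVR satisfies $(R:\mf{m})\supsetneq R$ anyway). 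What the paper's approach buys is that it never needs to identify the duals with fractional ideals, so it is phrased purely in terms of the length and annihilation properties of the $\Ext$ modules. Two small points worth tightening in your write-up: the justification that $\mf{m}$ is maximal Cohen--Macaulay is best phrased via the depth lemma applied to $0\to\mf{m}\to R\to k\to 0$ (or by noting that a nonzerodivisor of $R$ lying in $\mf{m}$ is $\mf{m}$-regular), rather than the slightly loose ``torsion-free of positive depth''; and you should note explicitly that $(R:_{Q(R)}\mf{m})\cong\mf{m}^{*}$ is finitely generated, so that Prop.~\ref{Prop:quotientHom} may legitimately be applied a second time.
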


\begin{proof}
First assume that $R$ is regular. Then since the global dimension of $R$ is $1$, the maximal ideal $\mf{m}$ is projective and thus reflexive.
Assume now that $R$ is not regular.  We show that $\mf{m} \cong \mf{m}^{**}=\Hom_R(\Hom_R(\mf{m},R),R)$, which by Prop.~\ref{Prop:reflexivebidual} is enough to show reflexivity of the maximal ideal.
Consider the exact sequence
$$ 0 \lra \mf{m} \lra R \lra R/ \mf{m} \lra 0.$$
Applying $\Hom_R(-,R)$ to it yields the exact sequence
\begin{equation} \label{Eq:mdual}
 0 \lra \Hom_R(R/\mf{m},R) \lra R \lra \mf{m}^* \lra \Ext^1(R/\mf{m},R) \lra 0.
 \end{equation}
Since $R/\mf{m}=k$ is of finite length over $R$ and hence of depth $0$, $\Hom_R(R/\mf{m},R)=0$. Moreover, note that $\Ext^1(R/\mf{m},R)$ is \ef{killed by $\mf{m}$, and thus} a module of finite length over $R$.  
Applying  $\Hom_R(-, R)$ to \eqref{Eq:mdual} gives
$$ 0 \lra \Hom_R(\Ext^1(k,R),R) \lra \mf{m}^{**} \lra R \lra \Ext^1(\Ext^1(k,R), R) \lra \cdots. $$
Here the same reasoning as above implies that $\Hom_R(\Ext^1(k,R),R)=0$ and thus $\mf{m}^{**}$ embeds into $R$. 
Assume that $\mf{m}^{**}$ were isomorphic to $R$. Then, as the dual of any module is reflexive (cf.~Prop.~\ref{Prop:BassMstar-reflexive}), we have $\mf{m}^{*} \cong \mf{m}^{***} = (\mf{m}^{**})^{*} \cong  R$.  Now, looking back at sequence \eqref{Eq:mdual}, the map $R \xrightarrow{} \mf{m}^{*} = R$ must be given by multiplication by some $a \in R$. But since the next term is killed by $\mf{m}$,  $R/aR$ is killed by $\mf{m}$, which implies that $aR=\mf{m}$ and  $R$ is a discrete valuation ring, contradiction. \\
On the other hand, $\Ext^1(\Ext^1(k,R), R)$ is of finite length and annihilated by $\mf{m}$, which implies that the image of $R$ in this module is also annihilated by $\mf{m}$. So the image is a factor of $R/\mf{m}=k$ and since the image is non-zero, it must equal $k$. But this implies that $\mf{m}^{**}$ is isomorphic to $\mf{m}$.
\end{proof}

\begin{Prop} \label{Prop:conductor-1-step}
 Let $(R,\mf{m})$ be a reduced one-dimensional noetherian local ring. Then $\mc{C} \supseteq \mf{m}$ if and only if $R$ is $1$-step normal.
\end{Prop}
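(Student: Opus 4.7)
The plan is to prove both implications by exploiting the reflexivity of the maximal ideal (Prop.~\ref{Prop:mreflexive}) together with Theorem \ref{Thm:traceequalconductor} and Lemma \ref{Lem:ConductorReflexive}. The key dichotomy to keep in mind is that $\mc{C}$ is a proper ideal of $R$ whenever $R \neq \widetilde R$: since $1_R \cdot \widetilde R \subseteq R$ would force $\widetilde R \subseteq R$, we always have $\mc{C} \subseteq \mf{m}$ in the non-normal case. Hence the hypothesis $\mc{C} \supseteq \mf{m}$ splits into two cases.

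For the implication $\mc{C} \supseteq \mf{m} \Rightarrow R$ is $1$-step normal, I would first dispose of the normal case: if $\mc{C} = R$, then $R = \widetilde R$ is a one-dimensional reduced normal local ring, hence a DVR, so $\mf{m}$ is principal, $\End_R(\mf{m}) \cong R = \widetilde R$, and $R$ is trivially $1$-step normal. Otherwise, by the dichotomy above, $\mc{C} \subseteq \mf{m}$, so the hypothesis forces $\mc{C} = \mf{m}$. Then Lemma \ref{Lem:ConductorReflexive} gives
\[
\End_R(\mf{m}) \;=\; \End_R(\mc{C}) \;=\; \widetilde R,
\]
which is exactly the definition of $1$-step normality.

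For the reverse implication, assume $\widetilde R \cong \End_R(\mf{m})$. If $R$ is regular, then $\mc{C} = R \supseteq \mf{m}$ and we are done. Otherwise, the isomorphism means in particular that $\widetilde R \cdot \mf{m} \subseteq \mf{m}$, i.e., $\mf{m}$ carries the structure of a $\widetilde R$-module. Since $\mf{m}$ is reflexive by Prop.~\ref{Prop:mreflexive} and $\widetilde R$ is a finite birational extension of $R$, Theorem \ref{Thm:traceequalconductor} (applied with $R' = \widetilde R$) yields
\[
\tr_R(\mf{m}) \;\subseteq\; \mc{C}.
\]
Finally, the inclusion $\iota \colon \mf{m} \hookrightarrow R$ is an element of $\Hom_R(\mf{m},R)$ whose image is $\mf{m}$ itself, so $\mf{m} \subseteq \tr_R(\mf{m}) \subseteq \mc{C}$, finishing the proof.

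I do not anticipate any real obstacle here; all the substantive work (reflexivity of $\mf{m}$, the trace-versus-conductor criterion, and $\End_R(\mc{C}) = \widetilde R$) has already been done in the preceding sections. The only mild care is in handling the regular/normal case separately, in observing the trivial inclusion $\mf{m} \subseteq \tr_R(\mf{m})$ via the natural embedding, and in noting that in the non-normal situation $\mc{C}$ is automatically contained in $\mf{m}$, which is what makes the one-line deduction $\mc{C} = \mf{m}$ possible from the hypothesis.
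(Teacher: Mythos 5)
Your proof is correct, and the direction $\mc{C} \supseteq \mf{m} \Rightarrow$ $1$-step normal coincides with the paper's argument (split into $\mc{C}=R$, giving a DVR, versus $\mc{C}=\mf{m}$, where $\End_R(\mf{m})=\End_R(\mc{C})=\widetilde R$ by Lemma \ref{Lem:ConductorReflexive}). The converse, however, you handle by a genuinely different and in fact shorter route. The paper starts from $\mf{m}^*\cong \End_R(\mf{m})\cong\widetilde R$, dualizes to get $\mf{m}^{**}\cong \widetilde R^{\,*}\cong\mc{C}$, invokes the reflexivity of $\mf{m}$ (Prop.~\ref{Prop:mreflexive}) to conclude $\mf{m}\cong\mc{C}$ as $R$-modules, and then must upgrade this abstract isomorphism to an honest equality of ideals by checking by hand that $\mf{m}$ is stable under multiplication by $\widetilde R$. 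You instead observe directly that $\widetilde R\cdot\mf{m}\subseteq\mf{m}$ and conclude $\mf{m}\subseteq\tr_R(\mf{m})\subseteq\mc{C}_{\widetilde R/R}$; note that the implication of Theorem \ref{Thm:traceequalconductor} you use is just Prop.~\ref{Prop:traceconductor} (indeed, the definition of the conductor as the largest common ideal), so your argument does not actually need the reflexivity of $\mf{m}$ at all --- the main technical input of the paper's proof. What you buy is brevity; what you should make explicit is the one point you gloss over: the definition of $1$-step normality is an abstract isomorphism $\widetilde R\cong\End_R(\mf{m})$, whereas your deduction $\widetilde R\cdot\mf{m}\subseteq\mf{m}$ uses the concrete identification $\End_R(\mf{m})=(\mf{m}:_{Q(R)}\mf{m})$ as a subring of $Q(R)$ that equals $\widetilde R$ on the nose. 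This does follow --- $(\mf{m}:\mf{m})$ is a finite birational extension of $R$, and being abstractly isomorphic to the normal ring $\widetilde R$ it is itself normal, hence coincides with $\widetilde R$ inside $Q(R)$ --- but a sentence to that effect is needed, since the paper's more laborious dualization argument exists precisely to avoid assuming the isomorphism is the identity on $R$.
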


\begin{proof} 
First assume that $R$ is $1$-step normal. If $R$ is regular, then $\mc{C}=R$ and we do not have to show more. If $R$ is not regular, then it is easy to see (by direct calculation) that $\End_R(\mf{m})=\Hom_R(\mf{m},R)$. 
Since $R$ has a $1$-step normalization, this means that $\mf{m}^* \cong \widetilde R$. Applying $\Hom_R(-,R)$ to both sides of the equation yields
$$\mf{m}^{**} \cong (\widetilde R)^*\cong \mc{C} \ .$$
Since by Prop.~\ref{Prop:mreflexive} $\mf{m}^{**}$ is isomorphic to $\mf{m}$, we have that $\mf{m} \cong \mc{C}$. It remains to show that $\mc{C} = \mf{m}$.  Clearly $\mc{C} \subseteq \mf{m}$. For the other inclusion we only need to show that $\mf{m}$ is closed under multiplication in $\widetilde R$, which will render $\mf{m}$ an ideal of $\widetilde R$ and then by definition of the conductor, $\mf{m} \subseteq \mc{C}$. Therefore, let $\varphi: \mc{C} \xrightarrow{} \mf{m}$ be the isomorphism (as $R$-modules). Any element in $\widetilde R$ can be written as $\frac{r}{s}$, for some $r,s \in R$, where $s$ is a nonzerodivisor. Let $m \in \mf{m}$, then $m=\varphi(c)$ for some $c \in \mc{C}$. Since $\mc{C}$ is closed under multiplication in $\widetilde R$, $\frac{r}{s}c=c'$ for some $c' \in \mc{C}$, or equivalently, $rc=sc'$. Applying the $R$-isomorphism $\varphi$ yields that $r\varphi(c)=rm=s\varphi(c')$ is in $\mf{m}$. Since $s$ is a nonzerodivisor and $\varphi(c') \in \mf{m}$, the claim follows.\\
For the other implication assume that $\mc{C}\supseteq  \mf{m}$. If the inclusion is strict, then $R$ is regular and hence $1$-step normal. If $\mf{m}=\mc{C}$, then $\End_R(\mc{C})= \End_R(\mf{m})$, and since the left hand side is equal to the normalization $\widetilde R$, our claim follows.
\end{proof}

We have the following connection between nearly Gorenstein rings and 1-step normal rings:

\begin{cor} \label{Cor:1-step-nearly-Gorenstein}
Under the conditions of the proposition, if $R$ is $1$-step normal, then $R$ is nearly Gorenstein.
\end{cor}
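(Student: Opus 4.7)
The plan is to reduce the statement to the inclusion $\tr_R(\omega_R) \supseteq \mc{C}$, where $\mc{C}$ denotes the conductor of the normalization, and then combine with Prop.~\ref{Prop:conductor-1-step}. If $R$ is regular, it is Gorenstein and the claim is trivial; otherwise Prop.~\ref{Prop:conductor-1-step} forces $\mc{C}=\mf{m}$ (since $\mc{C}\supseteq\mf{m}$ and $\mc{C}\neq R$), so it suffices to prove $\tr_R(\omega_R) \supseteq \mc{C}$.

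For this I would realize $\omega_R$ as a fractional ideal of $R$ with $R \subseteq \omega_R \subseteq \widetilde R$ inside the total ring of quotients $K$. Such a realization exists because $R$ is reduced one-dimensional (hence generically Gorenstein) with module-finite normalization, a standard fact for analytically unramified one-dimensional Cohen--Macaulay local rings. Under this identification, Prop.~\ref{Prop:quotientHom} gives $\omega_R^{*} = \Hom_R(\omega_R, R) = (R :_K \omega_R) =: \omega_R^{-1}$, and the formula for trace ideals recalled before Lemma~\ref{Lem:traceprops} yields $\tr_R(\omega_R) = \omega_R \cdot \omega_R^{-1}$.

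The inclusion $\tr_R(\omega_R) \supseteq \mc{C}$ is then a direct calculation: for any $c \in \mc{C} = (R :_K \widetilde R)$, the containment $\omega_R \subseteq \widetilde R$ gives $c\,\omega_R \subseteq c\,\widetilde R \subseteq R$, so $c \in \omega_R^{-1}$; combined with $1 \in \omega_R$, this yields $\mc{C} \subseteq \omega_R^{-1} \subseteq \omega_R \cdot \omega_R^{-1} = \tr_R(\omega_R)$. Chaining with $\mf{m} \subseteq \mc{C}$ from Prop.~\ref{Prop:conductor-1-step} yields $\tr_R(\omega_R) \supseteq \mf{m}$, proving that $R$ is nearly Gorenstein.

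The main obstacle is justifying the chain $R \subseteq \omega_R \subseteq \widetilde R$ cleanly in the stated generality, particularly when $R$ is not a domain and $K$ is a product of fields rather than a single field. An alternative that bypasses this subtlety is to directly invoke the statement $\tr_R(\omega_R) \supseteq \mc{C}$ proven by Herzog--Hibi--Stamate in \cite{HerzogHibietc}, which is established in essentially this setting and would conclude the argument in a single step.
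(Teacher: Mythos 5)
Your proposal is correct and follows essentially the same route as the paper: the paper's proof is exactly the observation that either $R$ is regular or $\mc{C}=\mf{m}$ (from Prop.~\ref{Prop:conductor-1-step}) combined with a citation of Prop.~6.6 of \cite{HerzogHibietc}, which is the inclusion $\mc{C}\subseteq\tr_R(\omega_R)$ you prove by hand. Your extra work of realizing $\omega_R$ as a fractional ideal with $R\subseteq\omega_R\subseteq\widetilde R$ is a legitimate (and standard) unwinding of that cited result, and your fallback of invoking it directly is precisely what the paper does.
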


\begin{proof}
 Since either $R$ is regular or $\mc{C}=\mf{m}$, by Prop.~6.6 of \cite{HerzogHibietc} $R$ is nearly Gorenstein.
\end{proof}

\begin{Bem}
 (1) The other implication does not hold: by Prop.~7.1 of \cite{HerzogHibietc}, the semigroup ring $R=k[[t^3,t^5,t^7]]$ is nearly Gorenstein. But the endomorphism ring of the maximal ideal $\mf{m}=(t^3,t^5,t^7)$ is $k[[t^2,t^3]]$, which is clearly not the normalization of $R$. \\
 (2) There is also the slightly different concept of \emph{almost Gorenstein rings}, that was first considered by Barucci--Fr\"oberg and later generalized by \cite{GotoTakahashiTaniguchi}. It can be shown that $1$-dimensional almost Gorenstein rings are always nearly Gorenstein (see \cite[Prop.~6.1]{HerzogHibietc}) but one can easily find examples of nearly Gorenstein rings that are not almost Gorenstein. There seem to be no clear relation between almost Gorenstein rings and $1$-step normalization rings, since e.g., the ring $R$ from (a) is also almost Gorenstein but not $1$-step normal and on the other hand the ring $S=k[[t^5,t^6,t^7]]$ is $1$-step normal but not almost Gorenstein (see \cite[Remark 6.2]{HerzogHibietc}.
\end{Bem}

Another interesting property of $1$-step normal rings is that the maximal ideal is isomorphic to its dual.

\begin{Prop} \label{Prop:onestepm}
Let $(R,\mf{m})$ be a $1$-step normal complete integral domain of dimension $1$. Then $\mf{m} \cong \mf{m}^{*}$.
\end{Prop}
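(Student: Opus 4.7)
The plan is to split into two cases according to whether $R$ is regular or not, since the $1$-step normal hypothesis is a trivial statement in the regular case.

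First I would dispose of the regular case: if $R$ is a regular local ring of dimension $1$, then $\mf{m}$ is principal (hence free), so $\mf{m} \cong R$ and $\mf{m}^{*} \cong R^{*} \cong R$, giving the desired isomorphism immediately.

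In the non-regular case, the key identification is that $\mf{m}^{*} = \End_R(\mf{m})$. This is the observation already used in the proof of Prop.~\ref{Prop:conductor-1-step}: since $R$ is a domain, both $\Hom_R(\mf{m},R)$ and $\End_R(\mf{m})$ sit inside $Q(R)$, and one checks that if $x \in Q(R)$ with $x\mf{m} \subseteq R$ but $x\mf{m} \not\subseteq \mf{m}$, then the ideal $x\mf{m}$ contains a unit, forcing $\mf{m} = x^{-1}R$ to be principal, contradicting non-regularity. Combining this identification with the $1$-step normal hypothesis yields $\mf{m}^{*} \cong \widetilde R$ as $R$-modules.

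It therefore remains to show $\mf{m} \cong \widetilde R$. By Prop.~\ref{Prop:conductor-1-step} (applied in the non-regular case), $\mf{m} = \mc{C}$, the conductor of the normalization. Since $R$ is a complete local \emph{domain} of dimension $1$, its normalization $\widetilde R$ is a complete local normal domain of dimension $1$, hence a discrete valuation ring; let $t$ be a uniformizer. Then $\mc{C}$, viewed as an ideal of the DVR $\widetilde R$, is principal, say $\mc{C} = t^{n}\widetilde R$, and multiplication by $t^{n}$ gives an $\widetilde R$-linear (in particular $R$-linear) isomorphism $\widetilde R \xrightarrow{\;\sim\;} \mc{C}$. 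Stringing these together gives $\mf{m} = \mc{C} \cong \widetilde R \cong \mf{m}^{*}$, completing the proof.

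The step I expect to be most delicate is the identification $\mf{m}^{*} = \End_R(\mf{m})$ in the non-regular case, since it is the only place where one has to do a short argument rather than invoke a named result; everything else is a direct application of Prop.~\ref{Prop:conductor-1-step} together with the fact that ideals in a DVR are principal. That the domain and completeness hypotheses are genuinely used is visible in this last fact: they are precisely what makes $\widetilde R$ a DVR and hence forces $\mc{C} \cong \widetilde R$.
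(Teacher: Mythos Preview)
Your proof is correct and follows essentially the same route as the paper's: split off the regular case, identify $\mf{m}^{*}=\End_R(\mf{m})\cong\widetilde R$ in the singular case, and then conclude $\mf{m}\cong\widetilde R$ by observing that $\mf{m}=\mc{C}$ is a nonzero ideal in the DVR $\widetilde R$. The only cosmetic difference is that the paper explicitly verifies $\mf{m}\widetilde R=\mf{m}$ to see that $\mf{m}$ is an $\widetilde R$-ideal, whereas you invoke directly that the conductor is a common ideal of $R$ and $\widetilde R$; both amount to the same thing.
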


\begin{Bem} The following short proof was kindly pointed out by the referee. Alternatively, one could use more representation theoretic methods and study the endomorphism ring of the $R$-module $M=R\oplus \widetilde R$ to show that the maximal ideal $\mf{m} \in \add M$. Then either $R$ is regular and $\mf{m} \cong R \cong \mf{m}^*$, or otherwise this implies that $\mf{m}\cong \widetilde R$.
\end{Bem}

\begin{proof}
If $R$ is regular, then $\mf{m} \cong R \cong \mf{m}^*$. Assume that $R$ is $1$-step normal, but not regular. Then $\widetilde R\cong \End_R(\mf{m})=\Hom_R(\mf{m},R)=\mf{m}^* \supsetneq R$. We show that then $\mf{m}$ itself is isomorphic to $\widetilde R$, which implies the claim. For this note that by Prop.~\ref{Prop:conductor-1-step} the conductor $\mc{C}=\mf{m}$, and thus $\mf{m}\widetilde R \subseteq R$. One can easily see  that the units of $R$ are not contained in $\mf{m}\widetilde R$. So this inclusion is strict, which implies $\mf{m}\widetilde R \subseteq \mf{m}R$. On the other hand, $\mf{m}R \subseteq \mf{m}\widetilde R$, so it follows that $\mf{m}R=\mf{m}\widetilde R$. Since $R$ is an integral domain, the normalization $\widetilde R$ is a discrete valuation ring, and the ideal $\mf{m}\widetilde R$ must be principal and isomorphic to $\widetilde R$. So $\mf{m}$ is indeed isomorphic to $\widetilde R$.
\end{proof}

\begin{cor}  \label{Cor:maximal-isom-normalization}
Let $(R,\mf{m})$ be a one-dimensional complete integral domain. Then $R$ is $1$-step normal if and only if $\widetilde R$ is isomorphic to $\mf{m}$.
\end{cor}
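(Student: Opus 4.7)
The plan is to prove both implications as short corollaries of the preceding results, without doing any new work: the forward direction is essentially already contained in the proof of Prop.~\ref{Prop:onestepm}, and the reverse direction follows by computing $\End_R(\widetilde R)$ via the change-of-rings lemma.

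First I would treat the forward implication. If $R$ is regular, then $\mf{m}\cong R\cong \widetilde R$, so the conclusion is immediate. If $R$ is $1$-step normal but not regular, then in the course of proving Prop.~\ref{Prop:onestepm} we have already shown the stronger statement $\mf{m}\cong \widetilde R$: the argument used there (that $\mc{C}=\mf{m}$ by Prop.~\ref{Prop:conductor-1-step}, that units of $R$ are not in $\mf{m}\widetilde R$ hence $\mf{m}\widetilde R\subsetneq R$, and that $\widetilde R$ is a DVR so $\mf{m}\widetilde R$ is principal and isomorphic to $\widetilde R$) produces precisely an $R$-module isomorphism $\mf{m}\cong \widetilde R$. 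So nothing more is required beyond citing the body of Prop.~\ref{Prop:onestepm}.

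For the converse, suppose $\mf{m}\cong \widetilde R$ as $R$-modules. The plan is to take $\End_R$ of both sides: this yields a ring isomorphism
\[
\End_R(\mf{m})\;\cong\;\End_R(\widetilde R).
\]
Since $R$ is a one-dimensional complete integral domain with $\widetilde R$ a module-finite birational extension, $\widetilde R$ is torsion-free over $R$, so Lemma \ref{Lem:HomChangeofRings} applies (with $S=\widetilde R$, $M=N=\widetilde R$) and gives
\[
\End_R(\widetilde R)\;=\;\End_{\widetilde R}(\widetilde R)\;=\;\widetilde R.
\]
Combining these two identifications gives $\End_R(\mf{m})\cong \widetilde R$, i.e.\ $R$ is $1$-step normal.

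The main (minor) obstacle will be to check that the forward direction is genuinely covered by the proof of Prop.~\ref{Prop:onestepm} rather than only by its weaker stated form $\mf{m}\cong \mf{m}^{*}$; since that proof already constructs an explicit $R$-module isomorphism $\mf{m}\cong \widetilde R$, this is a matter of phrasing rather than of additional argument.
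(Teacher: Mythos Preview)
Your proposal is correct and matches the paper's own proof essentially line for line: the forward direction is obtained by citing the body of Prop.~\ref{Prop:onestepm}, and the converse is the one-line computation $\End_R(\mf{m})\cong\End_R(\widetilde R)=\widetilde R$. You are simply more explicit than the paper in separating off the regular case and in invoking Lemma~\ref{Lem:HomChangeofRings} to justify $\End_R(\widetilde R)=\widetilde R$; the paper leaves that step implicit.
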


\begin{proof}
If $R$ is $1$-step normal then the proof of Prop.~\ref{Prop:onestepm} shows that $\mf{m} \cong \widetilde R$. Then 
$$\End_R(\mf{m})\cong \End_R(\widetilde R) = \widetilde R.$$
The other implication is clear.
\end{proof}

\begin{Bem}
Note that there is also the notion of \emph{self-dual module}: if $M$ is a module over local ring $R$ with canonical module $\omega$, then $M$ is self-dual if $M\cong \Hom_R(M,\omega)$. The property of $\mf{m}$ to be self-dual in this sense has e.g.~been appeared in \cite{HunekeVraciu, Ooishi} and more recently been studied in \cite{Kobayashi}.
\end{Bem}

One can also ask if in Prop.~\ref{Prop:onestepm} the condition $\mf{m} \cong \mf{m}^*$ is also sufficient for $R$ to be $1$-step normal, but this is certainly not true:

\begin{ex} \label{Ex:selfdualmax}
Let $R$ be a reduced local ring of dimension one with infinite residue field and not a discrete valuation ring. Assume further that the  multiplicity of $R$ is equal to $2$. By \cite[Thm.~2.6]{Ooishi} this condition is equivalent with the property that the embedding dimension of $R$ is $2$ and $\mf{m}=\mf{m}^{*}$ (and even that any $\CM$-module over $R$ is self-dual). Thus any plane curve of multiplicity two has the property $\mf{m} \cong \mf{m}^{*}$. Explicit examples are coordinate rings of $A_n$-curves, which do not have a one-step normalization for $n \geq 3$.  
\end{ex}

\section{Global spectrum of curve singularities}

In this final section we assume that $R$ is complete local noetherian. We are interested in the global spectrum $\gs(R)$ of $R$, where $R$ is one-dimensional and reduced (i.e., $\Spec(R)$ is a curve singularity), defined as 
$$\gs (R)=\{ n \in \NN : \textrm{ there exists a module } M \in \CM(R), \textrm{ such that } \gl \End_R(M) = n \} \ .$$
One restricts to $\CM$-modules because this is a class of modules for which one can use methods from representation theory.  In order to show that $1$ and $2$ are always in the global spectrum (Lemma \ref{12ingspec}) and characterize when $3$ is in the global spectrum (Theorem \ref{Thm:3ingspec}, under some additional assumptions on $R$) we will use normalization chains.

We start with a reduced local complete noetherian ring $R$ of dimension $1$ and consider a chain of the form \eqref{Eq:chain}. We want to determine 
\begin{equation} \label{Eq:endchain}
\gl \End_R(M), \textrm{ where }\quad  M=\bigoplus_{j=0}^l R_j \ .
\end{equation}
The most natural chain to study is
\begin{equation} \label{Eq:maxchain}
R=R_0 \subseteq \End_R(\mf{m}_R)= R_1 \subseteq \cdots \subseteq \End_{R_{l-1}}(\mf{m}_{R_{l-1}})=R_l = \widetilde R \ ,
\end{equation}
as in \cite{Leuschke07}. Also cf.~\cite{IyamaRejective} for a more representation theoretic approach (here the non-regular one dimensional rings are considered as orders over a DVR, which can be taken to be the Noether normalization since we assume $R$ to be complete). It has been shown that the global dimension of such an $\End_R(M)$ is bounded above by $l$ (see \cite[Thm.~4]{Leuschke07}), but it is not clear which value it will assume in general. Let us note that such chains and the global dimension of rings of the form \eqref{Eq:endchain} have been studied for semigroup rings   by Mousavidehshikh: he has shown that for a given integer $n$, one can construct certain semigroup rings $R$ such that $n \in \gs(R)$, and moreover that always $2 \in \gs(R)$ for these $R$, see \cite[Thm.~5.4 and Thm.~5.9]{Mousavidehshikh}. In general it is not clear, which integers are contained in $\gs(R)$.

\subsection{Elements in the global spectrum}
\begin{lem} \label{12ingspec}
Let $R$ be a complete local CM ring of Krull-dimension one, which is not regular. Then $\{1,2\}$ is always a subset of the global spectrum, that is, there always exist NCRs of global dimension $1$ and $2$ of $\Spec(R)$.
\end{lem}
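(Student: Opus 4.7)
The strategy is to exhibit explicit modules $M_1$ and $M_2$ over $R$ whose endomorphism rings realize global dimensions $1$ and $2$ respectively.

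First, to obtain global dimension $1$, take $M_1:=\widetilde R$. Since $R$ is complete and reduced, $\widetilde R$ is module-finite over $R$ and decomposes as a finite product of complete discrete valuation rings, so $\gl \widetilde R=1$. By Lemma~\ref{Lem:HomChangeofRings}, $\End_R(\widetilde R)=\End_{\widetilde R}(\widetilde R)=\widetilde R$, yielding an NCR of $R$ of global dimension exactly $1$; the inclusion $R\subsetneq \widetilde R$ is strict because $R$ is not regular.

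For global dimension $2$, the plan is to use the Grauert--Remmert chain $R=R_0\subsetneq R_1\subsetneq\cdots\subsetneq R_n=\widetilde R$ from Section~\ref{Sub:normalizationchains}, where $R_{i+1}=\End_{R_i}(\mf{m}_{R_i})$ and $n\ge 1$, and to set $R':=R_{n-1}$. By construction $R'$ is $1$-step normal (so $\widetilde{R'}=\widetilde R$); if $n=1$, then $R'=R$. Take $M_2:=R'\oplus\widetilde R$. By Lemma~\ref{Lem:HomChangeofRings}, $\End_R(M_2)=\End_{R'}(M_2)$, so it suffices to prove that $\Lambda:=\End_{R'}(R'\oplus\widetilde R)$ has global dimension exactly $2$. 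Identifying $\Lambda$ with the generalized matrix ring $\left(\begin{smallmatrix} R' & \mc{C}\\ \widetilde R & \widetilde R\end{smallmatrix}\right)$, where $\mc{C}=\mc{C}_{\widetilde R/R'}=\mf{m}_{R'}$ by Proposition~\ref{Prop:conductor-1-step}, one computes projective resolutions of the two simple $\Lambda$-modules $S_{R'}$ and $S_{\widetilde R}$. A direct matrix calculation shows that the first syzygy of $S_{R'}$ is isomorphic to the indecomposable projective $\Lambda$-module corresponding to $\widetilde R$, giving $\pd S_{R'}=1$; for $S_{\widetilde R}$ the second syzygy is projective, realized via multiplication by a uniformizer of $\widetilde R$ (a non-zerodivisor, since $\widetilde R$ is a product of DVRs), giving $\pd S_{\widetilde R}=2$.

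The main obstacle is the computation of $\pd S_{\widetilde R}$: showing that the second syzygy is projective requires careful tracking of how the conductor ideal and a uniformizer interact in the matrix structure of $\Lambda$. Combining the resolutions yields $\gl \Lambda=\max(1,2)=2$, so $\End_R(M_2)$ is an NCR of $R$ of global dimension exactly $2$.
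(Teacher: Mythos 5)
Your overall skeleton is the same as the paper's: global dimension $1$ comes from $\End_R(\widetilde R)=\widetilde R$, and global dimension $2$ comes from $M=R_{l-1}\oplus \widetilde R$ together with the change-of-rings Lemma \ref{Lem:HomChangeofRings}, which reduces everything to the $1$-step case $\End_{R'}(R'\oplus\widetilde R)$. Where you diverge is in how you pin down $\gl\Lambda=2$: the paper simply cites Leuschke \cite{Leuschke07}, Theorem 4 for the upper bound $\gl\Lambda\le 2$ and \cite[Prop.~2.8]{DFI} (a generator over a non-regular ring has global dimension $>1$) for the lower bound, whereas you propose to compute projective resolutions of the simple modules over the generalized matrix ring directly. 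That computation, if carried out, is essentially a re-proof of Leuschke's theorem in the case $l=1$, so the route is legitimate in principle.

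However, as written there is a genuine gap exactly where you flag ``the main obstacle'': the assertion that the second syzygy of $S_{\widetilde R}$ is projective is stated, not proved, and this is the entire content of the upper bound $\gl\Lambda\le 2$ --- without it you have shown nothing beyond $\pd S_{R'}=1$. Either supply that syzygy computation in full or cite \cite[Thm.~4]{Leuschke07} as the paper does; similarly, the cleaner way to get the lower bound is to observe that $\Lambda$ is the endomorphism ring of a generator over the non-regular ring $R'$ and invoke \cite[Prop.~2.8]{DFI}. Two further points need attention. First, you apply Proposition \ref{Prop:conductor-1-step} to $R'=R_{n-1}$ to conclude $\mc{C}_{\widetilde R/R'}=\mf{m}_{R'}$, but that proposition is stated for \emph{local} rings, and the intermediate rings $R_i$ in the chain \eqref{Eq:maxchain} are in general only semilocal (already for the node $k[[x,y]]/(xy)$ the ring $R_1=\widetilde R$ is not local); one must work with the Jacobson radical instead of a maximal ideal, and justify the identification of $\Hom_{R'}(\widetilde R,R')$ accordingly. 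Second, you speak of ``the two simple $\Lambda$-modules'', which tacitly assumes $\widetilde R$ is local, i.e.\ that $R$ is a domain; in general $\widetilde R$ is a finite product of complete DVRs and $\Lambda$ has one simple for each factor, so the resolution of each of these simples must be checked. Neither issue is fatal, but both must be addressed for the argument to cover the stated generality.
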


\begin{proof}
For a ring $R$ with these properties the normalization $\widetilde R$ is a finitely generated $\CM$-module. Since $\widetilde R$ is regular, its global dimension is $1$ and from $\End_R \widetilde R=\End_{\widetilde R} \widetilde R=\widetilde R$ follows that $1 \in \gs (R)$. Now look at the chain \eqref{Eq:maxchain}.
If $l=0$, then $R$ is regular. So we have $l \geq 1$. Let $M=R_{l-1}\oplus R_l$. Then, since all $R_i$'s are finite extensions of $R$, one has $\End_R (M)=\End_{R_i}(M)$ for all $i=1, \ldots, l-1$ (see Lemma \ref{Lem:HomChangeofRings}). Thus we may assume w.l.o.g. that $l=1$, i.e., $M=R_0 \oplus R_1$, where $R_0=R$ and $R_1=\widetilde R$. Then $\End_R M$ is a noncommutative resolution which is a generator. By  \cite[Prop.~2.8]{DFI} the global dimension is strictly greater than $1$. By \cite{Leuschke07} Theorem 4, the global dimension is bounded by $2$. Hence $\gl \End_R M=2$. 
\end{proof}

This lemma immediately shows that for a complete local reduced ring $R$ of Krull-dimension $1$, one has  $\gs(R)=\{1\}$ if and only $R$ is regular. For rings of Krull-dimension $2$ a slightly weaker statement holds: if $R$ is a singular complete normal domain with residue field of characteristic $0$, then $\gs(R)=\{2\}$ if and only if $\Spec(R)$ is a simple singularity, cf.~\cite[Cor.~4.13]{DFI}. For a ring of Krull-dimension $d \geq 3$ it is not clear how to interpret the equality $\gs(R)=\{d\}$ in terms of singularities of $\Spec(R)$. \\

For the remaining results, we make use of the classification of the complete equicharacteristic $0$ curve singularities of finite $\CM$ type by Greuel--Kn\"orrer \cite{GreuelKnoerrer}. Here we say that $X=\Spec(R)$ is a \emph{complete equicharacteristic $0$ curve singularity} if $(R,\mf{m})$ is a complete local CM ring of Krulldimension $1$ and $k=R/\mf{m}$ is algebraically closed and of characteristic $0$. For the general classification of one-dimensional local rings of finite $\CM$ type, a good overview can be found in \cite{LeuschkeWiegand}.

\begin{Prop} \label{Prop:finiteCM}
Let $X=\Spec(R)$ be a complete equicharacteristic $0$ curve singularity with $R$ of finite $\CM$ type. Then $3 \in \gs (R)$ if and only if $\Spec(R)$ is not regular or an $A_{2n}$-singularity.
\end{Prop}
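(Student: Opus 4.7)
The plan is to prove the two directions separately, using the Greuel--Knörrer classification \cite{GreuelKnoerrer} of complete equicharacteristic-$0$ one-dimensional finite $\CM$-type singularities to enumerate the cases. For the direction $3 \notin \gs(R)$ when $R$ is regular or of type $A_{2n}$: the regular case is immediate, since every $\CM$-module over a regular local ring is free, so every $\End_R(M)$ is Morita equivalent to $R$ and has global dimension $1$, giving $\gs(R) = \{1\}$. The core work is therefore the $A_{2n}$ case.

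For $R$ of type $A_{2n}$ I would first use Yoshino's matrix factorization classification to list the $n+1$ indecomposable $\CM$-modules of $R = k[[x,y]]/(x^2 - y^{2n+1})$ as $M_0 = R, M_1, \ldots, M_n$. A direct semigroup-ring computation with $R = k[[t^2, t^{2n+1}]]$ then shows that, up to $R$-module isomorphism, the $M_i$ coincide with the rings $R_0 = R \subsetneq R_1 \subsetneq \ldots \subsetneq R_n = \widetilde R$ of the iterative endomorphism chain (with $R_i \cong \End_R(M_i)$), and that each $R_i$ is itself an $A_{2(n-i)}$-singularity. Hence every $\CM$-module takes the form $\bigoplus_{i \in S} R_i^{c_i}$ for some nonempty $S \subseteq \{0, \ldots, n\}$, and $\gl \End_R(M)$ depends, up to Morita equivalence, only on the set $S$. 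I would then argue by induction on $n$, using Lemma \ref{Lem:HomChangeofRings} to reduce to endomorphism rings taken over the smallest $R_i$ appearing in $S$ and Lemma \ref{12ingspec} in the base case $A_2$, that $\gl \End_R(M) \in \{1, 2, \infty\}$ for every such $M$; in particular, $3 \notin \gs(R)$.

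For the direction $3 \in \gs(R)$ when $R$ is not regular and not of type $A_{2n}$: by Greuel--Knörrer such $R$ is one of the plane curves $A_{2n+1}, D_n, E_6, E_7, E_8$ or an intermediate ring in one of their normalization chains. The key observation is that, in contrast with the $A_{2n}$ case, the Grauert--Remmert chain here is either of length $\geq 3$ or carries an indecomposable $\CM$-module not lying in the chain (for instance, a single branch of the normalization in the two-branched cases). In the long-chain case take $M = R \oplus R_1 \oplus R_2 \oplus \widetilde R$; the upper bound $\gl \End_R(M) \leq 3$ comes from \cite[Theorem 4]{Leuschke07}, and equality is obtained by writing down an explicit length-$3$ projective resolution of the simple module at the $R$-vertex of the quiver of $\End_R(M)$. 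In the remaining short-chain cases ($A_3$, $D_4$, $D_5$, $E_6$ and their intermediate rings), the chain alone is insufficient, and one supplements it with a carefully chosen extra indecomposable $\CM$-module to realize global dimension exactly $3$.

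The hardest step is the $A_{2n}$ case: the rigid linear shape of the Auslander--Reiten quiver, forced by the recursion that each iterate of the chain is again an $A_{2k}$-singularity, is precisely what restricts the finite global dimensions of all endomorphism rings to $\{1, 2\}$. Establishing this dichotomy cleanly requires careful handling of the projective resolutions of the simple $\End_R(M)$-modules for every subset $S$ of the indecomposables; in the non-$A_{2n}$ cases, by contrast, the chain structure offers enough flexibility that the length-$3$ resolution described above can be produced explicitly.
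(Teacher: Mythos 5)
Your overall strategy is reasonable, but it is a genuinely different route from the paper, and as written it has real gaps exactly where the mathematical content lies. The paper's proof is essentially a citation: it uses the Greuel--Kn\"orrer classification \cite{GreuelKnoerrer} to reduce to the simple plane curves and the rings birationally dominating them, and then quotes \cite[Thm.~4.6]{DoFI}, where the global spectra of these rings were computed; the equivalence ``$3\in\gs(R)$ iff $\Spec(R)$ is neither regular nor of type $A_{2n}$'' is read off from that computation. What you propose is to redo that computation from scratch, and your sketch defers precisely the steps that \cite{DoFI} had to work hardest on.

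Concretely: (i) in the $A_{2n}$ case your identification of the indecomposables with the chain rings $R_0\subsetneq\cdots\subsetneq R_n=\widetilde R$ is correct, but the claimed dichotomy $\gl\End_R\bigl(\bigoplus_{i\in S}R_i\bigr)\in\{1,2,\infty\}$ for \emph{every} subset $S$ is only asserted; the induction you describe does not obviously close (for instance, for $S=\{0,1\}$ in $A_4$ the module is a generator not containing $\widetilde R$ as a summand, and neither Lemma \ref{Lem:HomChangeofRings} nor Lemma \ref{12ingspec} decides its global dimension). (ii) In the long-chain case your upper bound is not justified: \cite[Thm.~4]{Leuschke07} bounds the global dimension of the endomorphism ring of the \emph{full} iterated chain $\bigoplus_{i=0}^l R_i$ by $l$; it does not give $\gl\End_R(R\oplus R_1\oplus R_2\oplus\widetilde R)\le 3$ when $l>3$ and intermediate rings are skipped (already for $A_9$ no bound of $3$ is available from that theorem), and taking the full chain instead gives the bound $l$, not $3$, so exactness at $3$ again requires computation. (iii) The ``carefully chosen extra indecomposable'' in the short-chain cases is left entirely unspecified, yet this is where unavoidable case-by-case work sits. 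Each of these points is fixable in principle --- that is what the ladder techniques of \cite{DoFI} accomplish --- but your proposal does not yet constitute a proof; unless you intend to reprove \cite[Thm.~4.6]{DoFI}, you should simply cite it, as the paper does.
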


\begin{proof}
First note that the only Gorenstein curves of finite $\CM$-type are the simple plane curves, see \cite[Korollar 1]{GreuelKnoerrer}. The non-Gorenstein curves of finite $\CM$-type are the ones birationally dominating the simple plane curves, see e.g. \cite{Yos}. 
Let $R$ be any of the finite $\CM$ type rings. Then from \cite[Thm.~4.6]{DoFI} it follows that $3$ is contained in the global spectrum if and only if $R$ is not an $A_{2n}$-singularity.
\end{proof}

\begin{Prop} \label{Prop:onestep}
Let $X=\Spec(R)$ be a complete equicharacteristic $0$ curve singularity and assume that $R$ is $1$-step normal. Then if $R$ is Gorenstein, $\Spec(R)$ is either regular with $\gs(R)=\{1\}$, an $A_1$-singularity with  $\gs (R)=\{1,2,3\}$ or an $A_2$-singularity with $\gs (R)=\{1, 2\}$. If $R$ is not Gorenstein, then there exists a generator/cogenerator $M$ such that $\gl \End_R(M)=3$.
\end{Prop}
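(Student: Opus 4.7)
The plan is to split into the Gorenstein and non-Gorenstein cases and treat each separately.

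\textbf{Gorenstein case.} If $R$ is regular, then $R$ is a DVR, every Cohen--Macaulay module is free, and $\End_R(M)$ is Morita equivalent to $R$ for each such $M$, giving $\gs(R)=\{1\}$. Assume henceforth that $R$ is Gorenstein and singular. By Proposition~\ref{Prop:conductor-1-step}, $\mc{C}=\mf{m}$. I would invoke the classical length equality $\delta(R):=\ell(\widetilde R/R)=\ell(R/\mc{C})$ for one-dimensional reduced Gorenstein local rings, which together with $\ell(R/\mf{m})=1$ forces $\delta(R)=1$. Writing the complete normalization as $\widetilde R=\prod_{i=1}^{n}k[[t_i]]$ and noting that $\mf{m}=\mc{C}$ is an ideal of $\widetilde R$ of the form $\prod(t_i^{e_i})$, the identity $\sum e_i=\ell(\widetilde R/\mf{m})=\ell(\widetilde R/R)+\ell(R/\mf{m})=2$ leaves exactly two possibilities: $(n,e_1)=(1,2)$, giving $R\cong k[[t^2,t^3]]=A_2$, or $(n;e_1,e_2)=(2;1,1)$, giving $R\cong k[[x,y]]/(xy)=A_1$. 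The global-spectrum values then follow from combining Lemma~\ref{12ingspec} (giving $\{1,2\}\subseteq\gs(R)$ in the singular cases) with Proposition~\ref{Prop:finiteCM} (giving $3\in\gs(A_1)$ but $3\notin\gs(A_2)$); the upper bounds $\gs(A_1)\subseteq\{1,2,3\}$ and $\gs(A_2)\subseteq\{1,2\}$ are taken from \cite[Thm.~4.6]{DoFI}.

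\textbf{Non-Gorenstein case.} I would take $M=R\oplus\omega_R\oplus\widetilde R$, which is plainly a generator/cogenerator of $\CM(R)$, and aim to show $\gl\End_R(M)=3$. The $1$-step normal structure $\End_R(\mf{m})=\widetilde R$, $\mc{C}=\mf{m}$ is key. Since $\widetilde R$ is a finite product of DVRs, the rank-one $\widetilde R$-submodule $\omega_R\cdot\widetilde R$ of $Q(R)$ is principal; after a choice of representative it equals $\widetilde R$, so $\omega_R\hookrightarrow\widetilde R$ has finite-length cokernel annihilated by $\mf{m}$. Together with Lemma~\ref{12ingspec} (which gives $\gl\End_R(R\oplus\widetilde R)=2$), this would lift to a projective $\End_R(M)$-resolution of the simple at $\omega_R$ of length at most $3$, bounding $\gl\End_R(M)\le 3$ from above.

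For the lower bound I would rule out $\gl\End_R(M)\le 2$ using that $M$ is a generator/cogenerator and that $\End_R(\omega_R)\cong R$: the second syzygy over $\End_R(M)$ of the simple at $\omega_R$ would otherwise be forced into $\add(R\oplus\widetilde R)$, so that $\omega_R\in\add(R\oplus\widetilde R)$. By local indecomposability of $\omega_R$ this would give $\omega_R\cong R$ (contradicting non-Gorensteinness) or $\omega_R$ isomorphic to an indecomposable summand of $\widetilde R$ (contradicting $\End_R(\omega_R)=R$ unless $R$ itself is a DVR).

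\textbf{Main obstacle.} The genuinely technical step is the non-Gorenstein global-dimension computation: because $R$ may be of infinite $\CM$-type, the Greuel--Kn\"orrer classification is unavailable, and both the construction of a length-$3$ projective resolution of the simple at $\omega_R$ and the exclusion of $\gl\End_R(M)=2$ must be argued intrinsically from $1$-step normality together with the nearly Gorenstein property established in Corollary~\ref{Cor:1-step-nearly-Gorenstein}.
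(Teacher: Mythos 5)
Your Gorenstein case is correct and in fact more explicit than the paper's own treatment: the paper never spells out why the only Gorenstein $1$-step normal curve singularities are the regular ones, $A_1$ and $A_2$, whereas your argument via $\ell(\widetilde R/R)=\ell(R/\mc{C})=\ell(R/\mf{m})=1$ and the resulting constraint $\sum e_i=2$ on the conductor inside $\widetilde R=\prod k[[t_i]]$ does this cleanly; the quoted global spectra of $A_1$ and $A_2$ then come from \cite{DoFI}, exactly as in the paper.

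The non-Gorenstein case, however, contains a genuine gap, which you yourself flag. Two specific problems. First, for the upper bound $\gl\End_R(M)\le 3$ with $M=R\oplus\omega_R\oplus\widetilde R$ you only discuss the simple $\End_R(M)$-module sitting at the summand $\omega_R$; to bound the global dimension you must control the projective dimensions of \emph{all} simples, and the simple at the summand $R$ is the delicate one (its syzygies are governed by approximations of arbitrary $\CM$-modules by $\add(\omega_R\oplus\widetilde R)$, not by the single embedding $\omega_R\hookrightarrow\widetilde R$). Second, your lower-bound argument is a non sequitur as written: if $\gl\End_R(M)\le 2$, the second syzygy of the simple at $\omega_R$ lands in $\add M=\add(R\oplus\omega_R\oplus\widetilde R)$, and nothing forces it into the smaller subcategory $\add(R\oplus\widetilde R)$, so you cannot conclude $\omega_R\in\add(R\oplus\widetilde R)$. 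The paper closes both holes by quoting Iyama's results on orders and rejective subcategories: since $\mf{m}=\mc{C}$ is an ideal of the regular ring $\widetilde R$, \cite[Thm.~4.4]{IyamaRejective} (a relative of \cite[Thm.~1.1]{ErdmannHolmIyamaSchroeer}) gives $\rd(R)\le 3$, and \cite[Thm.~4.1.3]{IyamaRejective} gives both $\gl\End_R(R\oplus\omega_R\oplus\widetilde R)\le 3$ and the criterion that the value $2$ occurs only when $\add M=\CM(R)$ --- which fails when $R$ has infinite $\CM$-type, while the finite $\CM$-type non-Gorenstein curves are disposed of by Prop.~\ref{Prop:finiteCM}, i.e.\ by \cite[Thm.~4.6]{DoFI}. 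An ``intrinsic'' proof along your lines would essentially have to reprove these statements; as it stands the non-Gorenstein half of your argument is a plan rather than a proof.
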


\begin{proof} \ef { If $R$ is regular, then the only indecomposable $\CM$-module is $R$ itself and $\gl \End_R (R)=\gl R=1$, which implies that $\gs (R)=\{ 1 \}$. Therefore assume that $R$ is not regular.}
We will consider $R$ as an order over the DVR $T$ as e.g., in \cite{Dieterich, IyamaConstanta}. Since $R$ is complete, it has a canonical module $\omega_R$, which is given as $\Hom_T(R,T)$. Consider $M=R \oplus \widetilde R$, then by Leuschke's theorem \cite{Leuschke07}, the global dimension of $\End_R(M)\leq 2$. It is equal to $2$ by the same reasoning as in the proof of lemma \ref{12ingspec}. If $R$ is of finite $\CM$ type then Prop.~\ref{Prop:finiteCM} shows that $3 \in \gs (R)$ if and only if $\Spec(R)$ is not of type $A_{2n}$. If $R$ is of infinite $\CM$-type, then by Prop.~\ref{Prop:conductor-1-step}, the maximal ideal $\mf{m}$ is equal to the conductor of the normalization, and hence $\mf{m}$ is an ideal in $\widetilde R$. Then, since $\widetilde R$ is regular and thus of finite $\CM$ type, by Thm.~4.4 of \cite{IyamaRejective} (generalization of \cite{ErdmannHolmIyamaSchroeer}, Thm.~1.1) the representation dimension of $R$ is bounded above by $3$. Since $R$ is not of finite $\CM$-type it is equal to $3$ by \cite[Thm.~4.1.3]{IyamaRejective}.  Explicitly, one may take $M=R \oplus \omega_R \oplus \widetilde R$, by \cite[Thm.~4.1.3]{IyamaRejective} $\gl \End_R M \leq 3$ and it is equal to two if and only if $\add M=\CM (R)$.

\end{proof}

\begin{Thm} \label{Thm:3ingspec}
Let $X=\Spec(R)$ be a complete equicharacteristic $0$ curve singularity. Then $3 \in \gs(R)$ if and only if $\Spec(R)$ is neither regular nor an $A_{2n}$-singularity, $n \geq 1$. Equivalently: $\Spec(R)$ is an $A_{2n}$-singularity if and only if $\gs(R)=\{1,2\}$.
\end{Thm}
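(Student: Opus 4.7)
The plan is to reduce the statement to Propositions \ref{Prop:finiteCM} and \ref{Prop:onestep} via the natural normalization chain \eqref{Eq:maxchain} and the change-of-rings Lemma \ref{Lem:HomChangeofRings}. The ``only if'' direction is essentially immediate: if $R$ is regular then $\gs(R)=\{1\}$, and if $R$ is of type $A_{2n}$ (which has finite CM type) then Proposition \ref{Prop:finiteCM} gives $3\notin \gs(R)$; combined with Lemma \ref{12ingspec} this yields $\gs(A_{2n})=\{1,2\}$.

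For the ``if'' direction, assume $R$ is neither regular nor of type $A_{2n}$. If $R$ has finite CM type, Proposition \ref{Prop:finiteCM} directly gives $3\in \gs(R)$, so assume $R$ has infinite CM type. I would consider the natural normalization chain
\[
R=R_0\subsetneq R_1\subsetneq\cdots\subsetneq R_l=\widetilde R
\]
and single out the penultimate ring $R_{l-1}$, which by construction is $1$-step normal. The strategy rests on three substeps: (i) $R_{l-1}$ is not regular, for otherwise the chain would have terminated earlier; (ii) $R_{l-1}$ is not of type $A_{2n}$ (this is the main obstacle, discussed below); (iii) consequently $3\in \gs(R_{l-1})$---either directly via the non-Gorenstein case of Proposition \ref{Prop:onestep}, or, if $R_{l-1}$ is Gorenstein and $1$-step normal, then by Proposition \ref{Prop:onestep} it must be $A_1$, for which $\gs(A_1)=\{1,2,3\}$. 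Picking any witness $N\in \CM(R_{l-1})$ with $\gl \End_{R_{l-1}}(N)=3$, torsion-freeness of $N$ over $R_{l-1}$ (and hence over $R$) together with Lemma \ref{Lem:HomChangeofRings} gives $N\in \CM(R)$ and $\End_R(N)=\End_{R_{l-1}}(N)$, so $3\in \gs(R)$.

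The main obstacle is substep (ii). A direct computation with the semigroup presentation of $A_{2n}=k[[t^2,t^{2n+1}]]$ shows that $\End_{A_{2n}}(\mf{m}_{A_{2n}})=A_{2(n-1)}$ for every $n\geq 1$, so among the $A_{2n}$ only $A_2$ is $1$-step normal. Hence it suffices to rule out $R_{l-1}=A_2$. If $R_{l-1}$ were $A_2=k[[t^2,t^3]]$, then $\widetilde R=\widetilde{A_2}=k[[t]]$ would be a DVR and $R$ a local subring of $k[[t]]$; its finite birational extensions would then be in bijection with the (finitely many) numerical semigroups sandwiched between $H_R$ and $\NN$. By the Greuel--Kn\"orrer type characterization of finite CM type recalled in Corollary \ref{Cor:finitetraceCM} and the subsequent remark, $R$ would itself be of finite CM type, contradicting the standing assumption. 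The reducible case is trivial, since $\widetilde{A_{2n}}=k[[t]]$ is a domain whereas $\widetilde R$ is not, forcing $R_{l-1}\neq A_{2n}$. This closes the argument.
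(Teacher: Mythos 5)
Your overall architecture---reduce to the penultimate ring $R_{l-1}$ of the chain \eqref{Eq:maxchain}, which is $1$-step normal, and transfer a witness module back down via Lemma \ref{Lem:HomChangeofRings}---is sensible, and the ``only if'' direction, substep (i), and the transfer mechanism in (iii) are all fine. The gap is substep (ii), and it is genuine: it is \emph{not} true that $R_{l-1}=A_2$ forces $R$ to have finite $\CM$-type. Consider $R=k[[t^4,t^6,t^7,t^9]]=k\oplus kt^4\oplus t^6k[[t]]$. Its maximal ideal is $\mf{m}=t^4\cdot(k\oplus t^2k[[t]])=t^4A_2$, so $\End_R(\mf{m})=\End(A_2)=A_2$, and the chain is $R\subsetneq A_2\subsetneq k[[t]]$, i.e.\ $l=2$ and $R_{l-1}=A_2$. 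Yet $R$ contains no element of $t$-order $2$ or $3$, hence contains no copy of $A_{2n}$, $E_6$ or $E_8$ with the same quotient field, so by Greuel--Kn\"orrer it does \emph{not} dominate a simple plane curve and is of infinite $\CM$-type. Your justification also fails on its own terms: finite birational extensions of $R$ are not in bijection with numerical semigroups between $H_R$ and $\NN$ unless $R$ is essentially a semigroup ring; for instance $k[[t^4,t^5]]$ (infinite $\CM$-type) has infinitely many intermediate rings, e.g.\ the pairwise distinct $k[[t^4,t^5,t^6+\lambda t^7]]$, $\lambda\in k$, but only finitely many intermediate value semigroups.

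The same example shows that the strategy of producing the witness over $R_{l-1}$ cannot be repaired locally: for that $R$ one has $\gs(R_{l-1})=\gs(A_2)=\{1,2\}$, so no $\CM$-module over $R_{l-1}$ can witness $3\in\gs(R)$; any witness must involve $R$ itself as a direct summand. This is precisely the ingredient the paper uses and your argument lacks: when a ring $R_j$ of infinite $\CM$-type embeds into a representation-finite ring $R_{j+1}$ of type $A_{2i}$ with $\mf{m}_{R_j}$ an ideal of $R_{j+1}$, the radical-embedding/rejective-subcategory results of Iyama and Erdmann--Holm--Iyama--Schr\"oer (as invoked in Prop.~\ref{Prop:onestep} and in the inductive step of the paper's proof) bound the representation dimension of $R_j$ above by $3$, and infiniteness of $\CM$-type forces equality, yielding a generator--cogenerator whose endomorphism ring has global dimension exactly $3$. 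Without this, the infinite-$\CM$-type case is not closed.
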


\begin{proof}
One direction is clear: if $R$ is regular, then $\gs (R)=\{ 1 \}$, as argued in Prop.~\ref{Prop:onestep}. The global spectrum of an $A_{2n}$-singularity is $\gs R=\{1, 2\}$, by \cite[Thm.~4.4]{DoFI}. 

For the other direction
use the same notation as in the proof of Lemma \ref{12ingspec} above. 
If $l=1$ then the assertion follows from Prop.~\ref{Prop:onestep}.
Now assume that $l \geq 2$. Then there is a chain $R \subseteq R_1=\End_R \mf{m} \subseteq \cdots \subseteq R_l=\End_{R_{l-1}}\mf{m}_{R_{l-1}}=\widetilde R$. Suppose that $3 \not \in \gs (R)$. Argue by induction: for $l=1$ we have shown the assertion. Now we may assume that $R_{l-i}$ for all $i=0, \ldots, k \leq  l$ are $A_{2i}$-singularities. Then $R_{l-i-1} \hookrightarrow R_{l-i}$ is a radical embedding, see \cite{ErdmannHolmIyamaSchroeer} for the definition of this term. If $R_{l-i-1}$ were not of finite $\CM$-type then Theorem \cite[Thm~4.4]{IyamaRejective} 
 would yield an endomorphism ring of global dimension $3$. If $R_{l-i-1}$ is of finite $\CM$-type, then Prop.~\ref{Prop:finiteCM} shows that the only possibility is an $A_{2i+2}$-singularity.
\end{proof}

\begin{Bem}
It would be interesting to consider also chains of endomorphism rings that involve endomorphism rings over modules of rank $\geq 2$, in particular, one could make use of trace ideals in order to find suitable modules $M$ that are defined on a large integral extension of $R$.
\end{Bem}

\section{Acknowledgements}
The author wants to thank the anonymous referee for their helpful comments, additional references, and suggestions to improve the paper. \\
The work in this paper benefited from discussions with Ragnar Buchweitz. The author is deeply thankful for his advice - as well as for many discussions about Mathematics, Life, the Universe, and Everything. \\


\begin{thebibliography}{BLvdB10}

\bibitem[Bas63]{Bass63}
H.~Bass.
\newblock {On the ubiquity of Gorenstein rings}.
\newblock {\em Math. Z.}, 82:8--28, 1963.

\bibitem[BDL{\etalchar{+}}13]{ParallelNormalization}
Janko B\"{o}hm, Wolfram Decker, Santiago Laplagne, Gerhard Pfister, Andreas
  Steenpa\ss, and Stefan Steidel.
\newblock Parallel algorithms for normalization.
\newblock {\em J. Symbolic Comput.}, 51:99--114, 2013.

\bibitem[BDS14]{BoehmDeckerSchulze}
Janko B{\"o}hm, Wolfram Decker, and Mathias Schulze.
\newblock Local analysis of {G}rauert--{R}emmert-type normalization algorithms.
\newblock {\em Internat. J. Algebra Comput.}, 24(1):69--94, 2014.

\bibitem[BGS87]{BuchweitzGreuelSchreyer}
R.-O. Buchweitz, G.-M. Greuel, and F.-O. Schreyer.
\newblock Cohen-{M}acaulay modules on hypersurface singularities. {II}.
\newblock {\em Invent. Math.}, 88(1):165--182, 1987.

\bibitem[BH93]{BrunsHerzog93}
W.~Bruns and J.~Herzog.
\newblock {\em Cohen-{M}acaulay rings}, volume~39 of {\em Cambridge Studies in
  Advanced Mathematics}.
\newblock Cambridge University Press, Cambridge, 1993.

\bibitem[BLvdB10]{BLvdB10}
R.-O. Buchweitz, G.~J. Leuschke, and M.~van~den Bergh.
\newblock {Non-commutative desingularization of determinantal varieties I}.
\newblock {\em Invent. Math.}, 182(1):47--115, 2010.

\bibitem[Buc86]{BuchweitzMCM}
R.-O. Buchweitz.
\newblock {Maximal Cohen--Macaulay modules and Tate cohomology over Gorenstein
  rings}.
\newblock {\em unpublished manuscript}, 1986.

\bibitem[CHKV06]{CorsoHunekeKatzVasconcelos}
Alberto Corso, Craig Huneke, Daniel Katz, and Wolmer~V. Vasconcelos.
\newblock Integral closure of ideals and annihilators of homology.
\newblock In {\em Commutative algebra}, volume 244 of {\em Lect. Notes Pure
  Appl. Math.}, pages 33--48. Chapman \& Hall/CRC, Boca Raton, FL, 2006.

\bibitem[Chr00]{ChristensenGdim}
Lars~Winther Christensen.
\newblock {\em Gorenstein dimensions}, volume 1747 of {\em Lecture Notes in
  Mathematics}.
\newblock Springer-Verlag, Berlin, 2000.

\bibitem[DFI15]{DFI}
H.~Dao, E.~Faber, and C.~Ingalls.
\newblock Noncommutative (crepant) desingularizations and the global spectrum
  of commutative rings.
\newblock {\em Algebr. Represent. Theory}, 18(3):633--664, 2015.

\bibitem[DFI16]{DoFI}
B.~Doherty, E.~Faber, and C.~Ingalls.
\newblock Computing global dimension of endomorphism rings via ladders.
\newblock {\em J. Algebra}, 458:307--350, 2016.

\bibitem[Die91]{Dieterich}
E.~Dieterich.
\newblock Solution of a nondomestic tame classification problem from integral
  representation theory of finite groups {$(\Lambda=RC\sb 3,\;v(3)=4)$}.
\newblock {\em Mem. Amer. Math. Soc.}, 92(450):xx+140, 1991.

\bibitem[DITV15]{DaoIyamaTakahashiVial}
H.~Dao, O.~Iyama, R.~Takahashi, and C.~Vial.
\newblock Non-commutative resolutions and {G}rothendieck groups.
\newblock {\em J. Noncommut. Geom.}, 9(1):21--34, 2015.

\bibitem[dJ98]{deJong98}
T.~de~Jong.
\newblock An algorithm for computing the integral closure.
\newblock {\em J. Symbolic Comput.}, 26(3):273--277, 1998.

\bibitem[DM80]{DesalvoManaresi}
Fausto Desalvo and Mirella Manaresi.
\newblock On birational coverings.
\newblock {\em Geom. Dedicata}, 9(3):255--265, 1980.

\bibitem[EHIS04]{ErdmannHolmIyamaSchroeer}
Karin Erdmann, Thorsten Holm, Osamu Iyama, and Jan Schr{\"o}er.
\newblock Radical embeddings and representation dimension.
\newblock {\em Adv. Math.}, 185(1):159--177, 2004.

\bibitem[FMS19]{FMS}
Eleonore Faber, Greg Muller, and Karen~E. Smith.
\newblock Non-commutative resolutions of toric varieties.
\newblock {\em Adv. Math.}, 351:236--274, 2019.

\bibitem[GIK18]{GotoIsobeKumashiro}
S.~Goto, R.~Isobe, and S.~Kumashiro.
\newblock {The Gorenstein property and correspondences between trace ideals and
  birational finite extensions}.
\newblock 2018.
\newblock \url{https://arxiv.org/abs/1802.08409}.

\bibitem[GK85]{GreuelKnoerrer}
G.-M. Greuel and H.~Kn{\"o}rrer.
\newblock {E}infache {K}urvensingularit{\"a}ten und torsionsfreie {M}oduln.
\newblock {\em Math. Ann.}, 270:417--425, 1985.

\bibitem[GLS10]{GreuelLaplagneSeelisch}
Gert-Martin Greuel, Santiago Laplagne, and Frank Seelisch.
\newblock Normalization of rings.
\newblock {\em J. Symbolic Comput.}, 45(9):887--901, 2010.

\bibitem[GP08]{GreuelPfister08}
G.-M. Greuel and G.~Pfister.
\newblock {\em A Singular introduction to commutative algebra}.
\newblock Springer-Verlag, Berlin, 2008.
\newblock Second, extended edition. With contributions by Olaf Bachmann,
  Christoph Lossen and Hans Sch{\"o}nemann. With 1 CD-ROM (Windows, Macintosh
  and UNIX).

\bibitem[GR71]{GrauertRemmert71}
H.~Grauert and R.~Remmert.
\newblock {\em Analytische {S}tellenalgebren}, volume 176 of {\em Die
  Grundlehren der mathematischen Wissenschaften}.
\newblock Springer-Verlag, Berlin, 1971.

\bibitem[GTT15]{GotoTakahashiTaniguchi}
Shiro Goto, Ryo Takahashi, and Naoki Taniguchi.
\newblock Almost {G}orenstein rings---towards a theory of higher dimension.
\newblock {\em J. Pure Appl. Algebra}, 219(7):2666--2712, 2015.

\bibitem[Her71]{HerzogKunz}
{\em Der kanonische {M}odul eines {C}ohen-{M}acaulay-{R}ings}.
\newblock Lecture Notes in Mathematics, Vol. 238. Springer-Verlag, Berlin-New
  York, 1971.
\newblock Seminar {\"u}ber die lokale Kohomologietheorie von Grothendieck,
  Universit{\"a}t Regensburg, Wintersemester 1970/1971, Herausgegeben von
  J{\"u}rgen Herzog und Ernst Kunz.

\bibitem[HHS16]{HerzogHibietc}
J.~Herzog, T.~Hibi, and D.~Stamate.
\newblock {The trace of the canonical module}.
\newblock 2016.
\newblock \url{https://arxiv.org/abs/1612.02723}.

\bibitem[HN17]{HibiRings}
A.~Higashitani and Y.~Nakajima.
\newblock {Conic divisorial ideals of Hibi rings and their applications to
  non-commutative crepant resolutions}.
\newblock 2017.
\newblock \url{https://arxiv.org/abs/1702.07058}.

\bibitem[Hov09]{Hovinen}
B.~Hovinen.
\newblock Matrix factorizations of the classical determinant.
\newblock Phd thesis, University of Toronto, 2009.

\bibitem[HS06]{Huneke06}
C.~Huneke and I.~Swanson.
\newblock {\em Integral closure of ideals, rings, and modules}, volume 336 of
  {\em London Mathematical Society Lecture Note Series}.
\newblock Cambridge University Press, Cambridge, 2006.

\bibitem[HV06]{HunekeVraciu}
Craig Huneke and Adela Vraciu.
\newblock Rings that are almost {G}orenstein.
\newblock {\em Pacific J. Math.}, 225(1):85--102, 2006.

\bibitem[IN18]{IyamaNakajima}
Osamu Iyama and Yusuke Nakajima.
\newblock On steady non-commutative crepant resolutions.
\newblock {\em J. Noncommut. Geom.}, 12(2):457--471, 2018.

\bibitem[IW10]{IyamaWemyss10a}
O.~Iyama and M.~Wemyss.
\newblock {The Classification of Special Cohen-Macaulay Modules}.
\newblock {\em Math. Z.}, 265(1):41--83, 2010.

\bibitem[IW14]{IyamaWemyss10}
O.~Iyama and M.~Wemyss.
\newblock Maximal modifications and {A}uslander-{R}eiten duality for
  non-isolated singularities.
\newblock {\em Invent. Math.}, 197(3):521--586, 2014.

\bibitem[Iya01]{IyamaConstanta}
Osamu Iyama.
\newblock Representation theory of orders.
\newblock In {\em Algebra---representation theory ({C}onstanta, 2000)},
  volume~28 of {\em NATO Sci. Ser. II Math. Phys. Chem.}, pages 63--96. Kluwer
  Acad. Publ., Dordrecht, 2001.

\bibitem[Iya03]{IyamaRejective}
O.~Iyama.
\newblock {Rejective subcategories of artin algebras and orders}.
\newblock 2003.
\newblock \url{https://arxiv.org/abs/math/0311281}.

\bibitem[Kn{\"o}87]{KnoerrerCohenMacaulay}
Horst Kn{\"o}rrer.
\newblock Cohen-{M}acaulay modules on hypersurface singularities. {I}.
\newblock {\em Invent. Math.}, 88(1):153--164, 1987.

\bibitem[Kob18]{Kobayashi}
T.~Kobayashi.
\newblock {Rings with self-dual maximal ideals}.
\newblock 2018.
\newblock \url{https://arxiv.org/abs/1812.10341}.

\bibitem[KT18]{KobayashiTakahashi}
T.~Kobayashi and R.~Takahashi.
\newblock {Rings whose ideals are isomorphic to trace ideals}.
\newblock 2018.
\newblock \url{https://arxiv.org/abs/1807.00291}.

\bibitem[Leu07]{Leuschke07}
G.~J. Leuschke.
\newblock Endomorphism rings of finite global dimension.
\newblock {\em Canad. J. Math.}, 59(2):332--342, 2007.

\bibitem[Leu12]{Leuschke12}
G.~J. Leuschke.
\newblock Non-commutative crepant resolutions: scenes from categorical
  geometry.
\newblock In {\em Progress in commutative algebra 1}, pages 293--361. de
  Gruyter, Berlin, 2012.

\bibitem[Lin17]{Lindo}
Haydee Lindo.
\newblock Trace ideals and centers of endomorphism rings of modules over
  commutative rings.
\newblock {\em J. Algebra}, 482:102--130, 2017.

\bibitem[LP16]{LindoPande}
H.~Lindo and N.~Pande.
\newblock {Trace ideals and the Gorenstein property}.
\newblock 2016.
\newblock https://arxiv.org/abs/1802.06491.

\bibitem[LW12]{LeuschkeWiegand}
G.~J. Leuschke and R.~Wiegand.
\newblock {\em Cohen-{M}acaulay representations}, volume 181 of {\em
  Mathematical Surveys and Monographs}.
\newblock American Mathematical Society, Providence, RI, 2012.

\bibitem[Mou18]{Mousavidehshikh}
Ali Mousavidehshikh.
\newblock {Constructing endomorphism rings of large finite global dimension}.
\newblock 2018.
\newblock \url{https://arxiv.org/abs/1808.01704}.

\bibitem[Ooi96]{Ooishi}
Akira Ooishi.
\newblock On the self-dual maximal {C}ohen-{M}acaulay modules.
\newblock {\em J. Pure Appl. Algebra}, 106(1):93--102, 1996.

\bibitem[{\v{S}}VdB17]{SpenkovdB}
\v{S}pela {\v{S}}penko and Michel Van~den Bergh.
\newblock Non-commutative resolutions of quotient singularities for reductive
  groups.
\newblock {\em Invent. Math.}, 210(1):3--67, 2017.

\bibitem[Vas98]{Vasconcelos98}
W.~V. Vasconcelos.
\newblock {\em Computational methods in commutative algebra and algebraic
  geometry}, volume~2 of {\em Algorithms and Computation in Mathematics}.
\newblock Springer-Verlag, Berlin, 1998.
\newblock With chapters by David Eisenbud, Daniel R. Grayson, J{\"u}rgen Herzog
  and Michael Stillman.

\bibitem[VdB04]{vandenBergh04}
M.~Van~den Bergh.
\newblock Non-commutative crepant resolutions.
\newblock In {\em The legacy of Niels Henrik Abel}, pages 749--770. Springer,
  Berlin, 2004.

\bibitem[Yos90]{Yos}
Y.~Yoshino.
\newblock {\em Cohen--Macaulay modules over Cohen--Macaulay rings}, volume 146
  of {\em London Mathematical Society Lecture Note Series}.
\newblock Cambridge University Press, Cambridge, 1990.

\end{thebibliography}

\newcommand{\etalchar}[1]{$^{#1}$}
\def\cprime{$'$} \def\cprime{$'$}

\end{document}